\documentclass[a4paper,12pt,reqno]{amsart}
\usepackage[T1]{fontenc}
\usepackage[utf8]{inputenc}
\usepackage{anyfontsize}
\usepackage[english]{babel}
\usepackage{stmaryrd}
\usepackage{amssymb}

\usepackage[%
	left=3.5cm,       
	right=3.5cm,      
	top=3.5cm,        
	bottom=3.5cm,     
	heightrounded,    
	bindingoffset=0mm 
]{geometry}

\usepackage[dvipsnames]{xcolor}
\usepackage{graphicx}
\usepackage{tikz}
\usetikzlibrary{calc}
\usetikzlibrary{angles}
\usetikzlibrary{arrows.meta}
\usetikzlibrary{decorations.pathreplacing}
\usepackage{flafter}
\usepackage{subcaption}

\usepackage{xfrac}
\usepackage{functan}
\usepackage{mathtools}
\usepackage{mathrsfs}

\numberwithin{equation}{section}
\allowdisplaybreaks

\newcommand{\R}{\mathbb{R}}
\newcommand{\Sf}{\mathbb{S}}
 \newcommand{\e}{\varepsilon}
\newcommand{\di}{\mathrm{d}}
\newcommand{\K}{\mathcal{K}}
\newcommand{\BK}{\mathcal{B}\mathcal{K}}
\newcommand{\Hau}{\mathcal{H}}

\DeclareMathOperator{\dist}{dist}
\DeclareMathOperator{\Per}{Per}
\DeclareMathOperator{\diam}{diam}

\DeclareMathOperator{\vspan}{span}
\DeclareMathOperator{\interior}{int}
\DeclareMathOperator{\coh}{coh}

\definecolor{grey}{rgb}{.7,.7,.7}
\definecolor{evidGP}{rgb}{.8,0,.8}

\usepackage{enumitem}

\usepackage[pagewise, mathlines]{lineno}  

\usepackage{etoolbox} 

 \newcommand*\linenomathpatch[1]{%
   \expandafter\pretocmd\csname #1\endcsname {\linenomath}{}{}%
   \expandafter\pretocmd\csname #1*\endcsname{\linenomath}{}{}%
   \expandafter\apptocmd\csname end#1\endcsname {\endlinenomath}{}{}%
   \expandafter\apptocmd\csname end#1*\endcsname{\endlinenomath}{}{}%
 }
\newcommand*\linenomathpatchAMS[1]{%
  \expandafter\pretocmd\csname #1\endcsname {\linenomathAMS}{}{}%
  \expandafter\pretocmd\csname #1*\endcsname{\linenomathAMS}{}{}%
  \expandafter\apptocmd\csname end#1\endcsname {\endlinenomath}{}{}%
  \expandafter\apptocmd\csname end#1*\endcsname{\endlinenomath}{}{}%
}

\expandafter\ifx\linenomath\linenomathWithnumbers
  \let\linenomathAMS\linenomathWithnumbers
  \patchcmd\linenomathAMS{\advance\postdisplaypenalty\linenopenalty}{}{}{}
\else
  \let\linenomathAMS\linenomathNonumbers
\fi

\linenomathpatch{equation} 
\linenomathpatchAMS{gather}
\linenomathpatchAMS{multline}
\linenomathpatchAMS{align}
\linenomathpatchAMS{alignat}
\linenomathpatchAMS{flalign}

\usepackage[
colorlinks=true,
urlcolor=teal,
citecolor=magenta,
linkcolor=teal,
linktocpage,
pdfpagelabels,
bookmarksnumbered,
bookmarksopen,
breaklinks=true]
{hyperref}

\usepackage[capitalize,nameinlink,noabbrev]
{cleveref}

\theoremstyle{plain}
	\newtheorem{theorem}{Theorem}[section]
    \newtheorem{lemma}[theorem]{Lemma}
    \AddToHook{env/lemma/begin}{\crefalias{theorem}{lemma}}
    \crefname{lemma}{Lemma}{Lemmas}
	\newtheorem{proposition}[theorem]{Proposition}
    \AddToHook{env/proposition/begin}{\crefalias{theorem}{proposition}}
    \crefname{proposition}{Proposition}{Propositions}
	\newtheorem{corollary}[theorem]{Corollary}
    \AddToHook{env/corollary/begin}{\crefalias{theorem}{corollary}}
    \crefname{corollary}{Corollary}{Corollaries}

\theoremstyle{definition}
	\newtheorem{definition}[theorem]{Definition}
    \AddToHook{env/definition/begin}{\crefalias{theorem}{definition}}
    \crefname{definition}{Definition}{Definitions}
	\newtheorem{example}[theorem]{Example}
    \AddToHook{env/example/begin}{\crefalias{theorem}{example}}
    \crefname{example}{Example}{Examples}
	\newtheorem{remark}[theorem]{Remark}
    \AddToHook{env/remark/begin}{\crefalias{theorem}{remark}}
    \crefname{remark}{Remark}{Remarks}
    
\title[Convex sets with vanishing relative width]{Convex sets with vanishing relative width and the reverse Cheeger inequality}

\author[N.~Cont]{Nicolò Cont}
\address[N.~Cont]{Department of Mathematics and Data Science, Vrije Universiteit Brussel, Pleinlaan 2,
BE-1050 Elsene (Belgium)}
\email{nicolo.cont@vub.be}

\author[G.P.~Leonardi]{Gian Paolo Leonardi}
\address[G.P.~Leonardi]{Dipartimento di Matematica, Universit\`a di Trento, via Sommarive 14, IT-38123 Povo--Trento (Italy)}
\email{gianpaolo.leonardi@unitn.it}

\author[G.~Saracco]{Giorgio Saracco}
\address[G.~Saracco]{
Dipartimento di Matematica e Informatica, Università di Ferrara, via Machiavelli 30, IT-44121 Ferrara (Italy)
}
\email{giorgio.saracco@unife.it}

\keywords{Cheeger constant, reverse Cheeger inequality, principal widths, convex bodies.}

\subjclass[2020]{Primary 49Q20. Secondary 52A20, 35P15}

\begin{document}


\begin{abstract}
We study the reverse Cheeger inequality, which bounds from above the ratio of the first Dirichlet Laplacian eigenvalue to the square of the Cheeger constant. We first extend this inequality from convex sets to a broader class. We then analyze maximizing sequences among convex bodies. To quantify domain collapse, we introduce the notion of principal widths. For three-dimensional convex bodies, we prove that if the ratio of the first principal width to the second principal width vanishes along a sequence of convex bodies, then such sequence is maximizing. Finally, in arbitrary dimensions, we prove that the same property holds for the class of rhomboid-like sets.
\end{abstract}

\maketitle


\section{Introduction}

Given a nonempty, bounded, and open set $\Omega\subset\R^N$, it is classically known that
\begin{equation}
\label{eq:Cheeineq}
J(\Omega)
=
\frac{\lambda_1(\Omega)}{h(\Omega)^2}
\ge
\frac{1}{4}\,,
\end{equation}
where $\lambda_1(\Omega)$ denotes the first eigenvalue of the Dirichlet Laplacian and $h(\Omega)$ the Cheeger constant of $\Omega$. Inequality~\eqref{eq:Cheeineq} is the celebrated \textit{Cheeger inequality}, a scale-invariant inequality proved for closed Riemannian manifolds by Cheeger in~\cite{Che70} (see also~\cite{Gri99} for the proof in the Euclidean case, using a capacitary inequality by Maz'ya~\cite{Maz62a}). A reverse form of~\eqref{eq:Cheeineq} was later obtained by Buser~\cite{Bus82} in the context of Riemannian manifolds. Specifically, when $M$ is a closed, $N$-dimensional manifold with Ricci curvature bounded from below by $-(N-1)\delta^2$ for some $\delta\ge 0$, Buser's inequality reads as
\[
\lambda_1(M) 
\le 
2(N-1)\delta\, h(M)+10\, h(M)^2\,.
\]
The proof of the inequality heavily relies on Heintze--Karcher comparison theorem coupled with the boundedness of the mean curvature of an optimal, codimension-$1$ submanifold obtained from the variational definition of $h(M)$. This proof strategy, however, is no longer applicable to the Dirichlet Laplacian on manifolds with boundary -- and thus, in particular, to the case of Euclidean domains in which we are specifically interested. Indeed, the topology of $\Omega$ (and notably its degree of connectedness) plays a crucial role in ensuring an upper bound of $\lambda_1(\Omega)$ in terms of $h(\Omega)$, as shown in~\cite[Prop.~5.1]{BB24}.

All these functionals can also be defined on convex bodies, i.e., bounded, closed, and convex sets with nonempty interior (see \cref{sec:preliminaries} for details).
In this context, Parini in~\cite[Prop.~4.1]{Par17} observed the validity of the inequality
\begin{equation}
\label{eq:Buser}
J(\Omega)
<
\frac{\pi^2}{4}\,,
\end{equation}
which we refer to here as the \textit{reverse Cheeger inequality} for the Dirichlet Laplacian. This is a direct consequence of a classical result by P\'olya~\cite{Pol60}. A closer inspection of P\'olya's proof reveals that a more general class of sets satisfying~\eqref{eq:Buser} is given by those whose Cheeger sets are \textit{perimeter-regular}. 
A perimeter-regular set is an open and bounded set with finite perimeter whose inner parallel sets have a perimeter strictly smaller than that of the full set (see \cref{sec:perimeter_regularity} for precise details).
Several extensions and variants of this type of spectral inequality can be found in the literature. In~\cite{Bra20}, a Buser-type inequality is derived for the ratio between the first Dirichlet eigenvalue of the $p$-Laplacian and the $p$-power of the Cheeger constant. In~\cite{BBP23,BBV25}, more general ratios involving first eigenvalues of $p$- and $q$-Laplacians are considered, together with the corresponding maximization problem among open sets (see in particular \cite[Thm.~2.9]{BBP23}, showing that the supremum is $+\infty$ and therefore cannot be attained on bounded open sets, but is realized by some smooth unbounded set, see \cite[Thm.~1.1]{BBV25}). 

Furthermore, the upper bound in~\eqref{eq:Buser} is asymptotically sharp. For instance, the sequence of parallelepipeds $P_k = (0,1)\times(0,k)^{N-1}$ satisfies $J(P_k)\to \pi^2/4$, see \cref{subs:maxseq}.
In dimension $N=2$, the characterization of sequences of convex bodies saturating~\eqref{eq:Buser} is even more stringent: they are precisely those of convex bodies which, up to rescaling, maintain a fixed minimal width while their diameters diverge (see \cref{prop:parini-saturate}). A similar characterization fails in higher dimensions, as illustrated by the Cartesian product of an $(N-1)$-dimensional unit ball with a sequence of intervals of diverging length, see \cref{ex:cylinders}.
To the best of our knowledge, the problem of fully characterizing such maximizing sequences is completely open in $\R^N,\ N\ge 3$.

Our main contribution is a partial characterization of the maximizing sequences for~\eqref{eq:Buser} among convex sets in dimension $N=3$, which is based on the notion of \emph{relative width} $w^{\mathrm{rel}}(\Omega)$ of a convex body $\Omega\subset\R^N$, see \cref{def:relative_width}. In \cref{thm:3-dim-reverse-cheeger}, we show that any sequence $(\Omega_k)_k$ of convex bodies in $\R^3$, for which $w^{(\mathrm{rel})}(\Omega_k)\to 0$, is a maximizing sequence for~\eqref{eq:Buser}.  
We also conjecture that these are the only possible maximizing sequences among convex bodies. Specifically, we expect that no sequence of convex bodies that are uniformly bounded with respect to at least two orthogonal directions (such as the cylinders in \cref{ex:cylinders}) can be  maximizing when $N\ge 3$.

The proof of \cref{thm:3-dim-reverse-cheeger} consists of showing that each $\Omega_k$, rescaled so that its minimal width is $1$, must contain---up to an isometry---a parallelepiped $R_k$ of the form
\[
R_k = [0,a_k]\times [0,b_k]^2\,,
\]
where $(a_k)_k$ and $(b_k)_k$ are sequences of positive real numbers satisfying $a_k\to 1$ and $b_k\to+\infty$.
This inner parallelepiped is used to estimate $h(\Omega_k)$ from above, whereas $\lambda_1(\Omega_k)$ is estimated from below by means of a sequence of parallelepipeds $P_k$ containing $\Omega_k$.

Moreover, in \cref{sec:rhomboid} we prove that a slightly stronger assumption on the sequence $(\Omega_k)_k$ is sufficient to guarantee that it saturates the reverse Cheeger inequality in any dimension.

The present paper is structured as follows.
\begin{itemize}
    \item In \cref{sec:preliminaries}, we recall some standard notions and known results, as well as the notation that will be used later on.
    \item In \cref{sec:perimeter_regularity} and \cref{sec:reverse_Cheeger}, we give the definition of perimeter-regularity of a Borel set in $\R^N$, supplementing it with some examples, and we prove that \eqref{eq:Buser} holds for every open and bounded set possessing a perimeter-regular Cheeger set.
    \item In \cref{subs:maxseq}, we provide an example of a sequence of sets that saturates \eqref{eq:Buser}, and we present a known characterization of such sequences in the case $N=2$.
    We also show that this characterization fails for every $N\ge3$, see \cref{ex:cylinders}.
    \item In \cref{sec:rel_width}, we introduce the notions of principal widths and relative width of a convex set, and show some related properties that will play a crucial role in the proof of our main result.
    \item Finally, in \cref{sec:maximizing_sequences}, we prove a sufficient condition for a sequence of convex sets to saturate \eqref{eq:Buser} in the case $N=3$, and we show that a slightly stronger assumption on this sequence allows us to extend this proof to every dimension $N\ge3$.
\end{itemize}

\section{Preliminaries and notation}
\label{sec:preliminaries}

In this section, we recall some well-known geometric definitions and properties, which will be used throughout the paper.

Let $\Omega\subset\R^N$ be a nonempty, bounded, and open set.
Its \emph{Cheeger constant} is defined as
\begin{equation}
\label{eq:cheeger_constant_def}
h(\Omega)=\inf \left\{\, \frac{\Per(E)}{|E|}\,:\, E\subset\Omega,\, |E|>0 \,\right\},
\end{equation}
where $\Per(-)$ denotes the variational perimeter, see~\cite{Mag12} for the definition.
Any subset $C$ attaining the infimum is called a Cheeger set of $\Omega$.

Here we list some well-known properties of the Cheeger constant:
\begin{enumerate}[label=(\roman*)]
    \item
    \label{item:cheeger-isometrie}
    $h(\Omega)$ is invariant by isometries of $\Omega$ in $\R^N$;
    \item
    \label{item:cheeger-scaling}
    (scaling) for every $t>0$, it holds that $h(t\Omega)=t^{-1}h(\Omega)$, where
    \[
        t\Omega=\{\,x\in\R^N : t^{-1}x\in\Omega\,\};
    \]
    \item
    \label{item:cheeger-monotonicity}
    (monotonicity) for every $\Sigma\subset\Omega$, it holds that $h(\Omega)\le h(\Sigma)$;
    \item
    \label{item:cheeger-existence}
    every bounded and open set $\Omega$ admits a Cheeger set, which is unique if we assume $\Omega$ to be convex;
    \item
    \label{item:cheeger-lip-bdd}
    if $\partial\Omega$ is Lipschitz, one can equivalently define $h(\Omega)$ by taking the infimum in~\eqref{eq:cheeger_constant_def} among sets $E\subset\overline{\Omega}$.
\end{enumerate}

Property~\ref{item:cheeger-isometrie} is trivial from the definition of $h(\Omega)$; properties~\ref{item:cheeger-scaling} and~\ref{item:cheeger-monotonicity} are proved in~\cite[Prop.~3.5]{Leo15}, as well as the existence of a Cheeger set mentioned in~\ref{item:cheeger-existence}; the uniqueness of such a set when $\Omega$ is convex is proved in~\cite[Thm.~1]{AC09}; finally, point~\ref{item:cheeger-lip-bdd} holds for sets enjoying even a weaker regularity than Lipschitz, see~\cite[Prop.~3.4]{LMR24}.

For every $\Omega$ as taken before, we define the functional
\[
J(\Omega)=\frac{\lambda_1(\Omega)}{h(\Omega)^2}\,,
\]
where $\lambda_1(\Omega)$ denotes the first Dirichlet eigenvalue of the Laplacian of $\Omega$, characterized as the infimum of its Rayleigh quotients, \emph{i.e.},
\[
\lambda_1(\Omega)=\inf\left\{\,\frac{\|\nabla v\|^2_2}{\| v\|^2_2}\,:\, v\in H^1_0(\Omega) \setminus\{ 0 \}\,\right\}.
\]
Thanks to the scaling property of $h(-)$ mentioned above, one can easily verify that $J(-)$ is invariant by isometries and scalings in $\R^N$.

Further, by virtue of property~\ref{item:cheeger-lip-bdd} of the Cheeger constant, and the fact that for Lipschitz sets one has
\[
H^1_0(\Omega)
=
\{u\in H^1(\R^N)\,:\, u=0\quad \text{a.e. on \(\R^N\setminus\Omega\)}\},
\]
it is convenient to extend the functionals $\lambda_1(-),\ h(-)$, and $J(-)$ to the closure $\overline{\Omega}$ of every open set $\Omega$ with Lipschitz boundary. 


The \emph{inradius} of a Borel set $E\subset\R^N$, which we denote by $\rho(E)$, is defined as
\[
\rho(E) = \sup \{\,\dist(x, \partial E)\,:\, x\in E\,\}.
\]
%
If $\rho(E)>0$, then for every $t$ such that $0<t<\rho(E)$ we define the \emph{inner parallel set} of $E$ at distance $t$ as the set
    \[
        E^t=\{\,x\in E:\dist(x,\partial E)\ge t\,\},
    \]
    which is a closed set contained in $E$. 

For every positive integer $N$, we denote by $\mathcal{K}_N$ the family of all the convex and closed sets $\Omega\subset\mathbb{R}^N$ with $\interior(\Omega)\ne\emptyset$.
Moreover, we denote by $\BK_N\subset\K_N$ the subfamily of all the bounded sets in $\K_N$, which coincides with the family of convex bodies in $\R^N$.

\section{Reverse Cheeger inequality}

As mentioned before, in this section we give the full derivation of the reverse Cheeger inequality~\eqref{eq:Buser}, thus providing the reader with the proper context needed in the rest of the paper.

\subsection{Perimeter-regularity}
\label{sec:perimeter_regularity}

We begin by giving the definition of \emph{perimeter-regularity}, which is crucial to the proof, and providing some notable examples of perimeter-regular sets.

\begin{definition}[Perimeter-regular set]
    Let $E\subset\R^N$ be a Borel set with $\rho(E)>0$.
    We say it is \emph{perimeter-regular} if
    \[
        \Per(E^t)\le\Per(E)
    \]
    for almost every $t$ such that $0<t<\rho(E)$, and \emph{strictly perimeter-regular} if the strict inequality holds.
\end{definition}

\begin{example}[Convex sets]
\label{ex:examples-convex}
Any closed convex set is strictly perimeter-regular by Steiner's formula~\cite{Ste13}, see also~\cite[Sect.~4.2]{Sch14}. The inverse is in general not true, see \cref{ex:examples-2d-k-connected}.
\end{example}

\begin{example}[$2$-dimensional $k$-connected sets]
\label{ex:examples-2d-k-connected}
A \(2\)-dimensional closed set \(E\) is called \(k\)-connected if its complement has \(k\) connected components; see \cite[Def.~1.1]{BB24}. We note that, if $E$ is path-connected, this definition corresponds to having genus $k-1$. If $E$ is \(k\)-connected with \(k=1,2\), it is perimeter-regular (strictly, if $k=1$), while for $k>2$, $E$ is not perimeter-regular; see~\cite[Thm.~II]{Mak59}. 
%

%
%
\end{example}

\begin{example}[Spherical shells]
\label{ex:examples-shells}
Let $S_{R,r}(x)\subset\mathbb{R}^N$ denote the spherical shell $B_R(x)\setminus \overline{B}_r(x)$ with $0<r<R$. Clearly, $(S_{R,r}(x))^t=S_{R-t, r+t}(x)$, and thus
\[
\Per((S_{R,r}(x))^t) = N\omega_N \big((R-t)^{N-1}+(r+t)^{N-1}\big) \qquad \forall t\,:\, 0 < t < \frac{R-r}{2}.
\]
Since $t\mapsto \Per((S_{R,r}(x))^t)$ has negative derivative in the interval $(0, (R-r)/2)$, the spherical shell $S_{R,r}(x)$ is strictly perimeter-regular.

\end{example}

\begin{example}[Tori]
\label{ex:examples-tori}
Let $\mathbb{T}\subset\mathbb{R}^3$ be a solid torus. Without loss of generality, assume $\mathbb{T}$ to be given by the rotation of the two-dimensional ball $B^2_r((R,0,0))$, lying in the plane $xz$, about the $z$-axis, with $0<r<R$.
Clearly, $\mathbb{T}^t$ is the solid torus given by the rotation of $B^2_{r-t}((R,0,0))$ about the same axis. By Pappus--Guldinus Theorem, we have
\[
\Per(\mathbb{T}^t) = 4\pi^2 R(r-t),
\]
which, as a function of $t\in(0,r)$, is strictly decreasing. Therefore, the torus $\mathbb{T}$ is strictly perimeter-regular.
\end{example}

Let $\Sigma$ be a smooth, compact, oriented surface without boundary in $\mathbb{R}^3$. By Weyl's tube area formula~\cite{Wey39} (refer also to Federer's curvature measures~\cite{Fed59}), for $\varepsilon$ sufficiently small,
\begin{align}
\label{eq:weyl}
\Hau^2(\Sigma_\varepsilon) &= \Hau^2(\Sigma) +\varepsilon \int_{\Sigma} H\,\mathrm{d}\Hau^{2} + \varepsilon^2 \int_{\Sigma} K\,\mathrm{d}\Hau^{2} \nonumber \\
&= \Hau^2(\Sigma) +\varepsilon \int_{\Sigma} H\,\mathrm{d}\Hau^{2} + 4\pi(1-g(\Sigma))\varepsilon^2,    
\end{align}
where $H$ denotes the mean curvature of $\Sigma$, $K$ the Gaussian curvature of $\Sigma$, $\Sigma_\varepsilon$ the parallel surface at \emph{signed distance} $\varepsilon$ from $\Sigma$, and $g(\Sigma)$ the genus of $\Sigma$ (the second identity follows from Gauss--Bonnet Theorem). This is a consequence of the general Weyl's tube formula expressing the $N$-volume of the $\e$-neighbourhood of $\Sigma$ as a polynomial of degree $N$ in $\e$ with coefficients depending on the principal curvatures of $\Sigma$. 

The next example is a direct consequence of this result.

\begin{example}[Toroidal sets] 
\label{ex:examples-3d-surfaces}
Let $\mathbb{T}\subset\mathbb{R}^3$ be a solid toroidal set, that is, a set such that $\partial\mathbb{T}$ is a compact, non-overlapping, curved tube, as defined in~\cite{KLV19}, and let $\gamma$ be the associated curve.
By Weyl's formula applied to $\Sigma=\partial\mathbb{T}$ we know that
\[
\Per(\mathbb{T}^t)=\Per(\mathbb{T})-t\int_{\partial\mathbb{T}} H\,\mathrm{d}\Hau^2
\]
because the genus of $\partial\mathbb{T}$ is $1$.
If we denote by $(\mathbf{t},\mathbf{n},\mathbf{b})$ the Frenet's basis associated to $\gamma$, then $\partial\mathbb{T}$ can be parametrized by
\[
    F(s,\theta)=\gamma(s)+r(\cos(\theta)\mathbf{n}(s)+\sin(\theta)\mathbf{b}(s))
\]
where $r$ denotes the distance between $\partial\mathbb{T}$ and $\gamma$.
By computing the principal directions of this surface and applying Frenet's formulas, we get the area element
\[
    \di A =r(1-r\kappa(s)\cos(\theta))\,\di\theta\,\di s
\]
where $\kappa$ denotes the curvature of $\gamma$.
Moreover, the two principal curvatures of $\partial\mathbb{T}$ are
\[
    k_1=\frac{1}{r} \qquad \text{and} \qquad k_2=-\frac{\kappa(s)\cos(\theta)}{1-r\kappa(s)\cos(\theta)},
\]
and thus
\begin{align*}
    \int_{\partial\mathbb{T}} H\,\mathrm{d}\Hau^2&= \int_0^L \int_0^{2\pi}\left(\frac{1}{r}-\frac{\kappa(s)\cos(\theta)}{1-r\kappa(s)\cos(\theta)}\right)r(1-r\kappa(s)\cos(\theta))\,\di\theta\,\di s \\
    &=\int_0^L \int_0^{2\pi}(1-2r\kappa(s)\cos(\theta))\,\di\theta\,\di s \\
    &=2\pi L,
\end{align*}
where $L$ denotes the length of the curve $\gamma$.
In conclusion, we find that $\Per(\mathbb{T}^t)=\Per(\mathbb{T})-2\pi Lt<\Per(\mathbb{T})$ for every $t>0$, which means that $\mathbb{T}$ is strictly perimeter-regular.

This agrees with~\cref{ex:examples-tori}, since in that case we have $L=2\pi R$ and $\Per(\mathbb{T})=4\pi^2Rr$.
\end{example}

\subsection{Proof of the reverse Cheeger inequality}
\label{sec:reverse_Cheeger}

We reproduce here the proof of a result from~\cite[Ineq. 2]{Pol60}. There, the author proved it under the assumption that $A$ is an open, bounded and convex set in the Euclidean plane. However, as highlighted by the following lemma, the same can actually be obtained for possibly unbounded, perimeter-regular open sets in any dimension.

\begin{lemma}
\label{lem:polya-ineq}
Let $A\subset\R^N$ be open and perimeter-regular. Then,
\begin{equation}
\label{ineq:polya}
  \lambda_1(A) |A|^2 
  \le
  \frac{\pi^2}{4} \Per(A)^2.
\end{equation}
Moreover, the inequality is strict if the set is strictly perimeter-regular.
\end{lemma}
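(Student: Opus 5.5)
We follow Pólya's method of interior parallels: plug into the Rayleigh quotient a test function that depends only on the distance to the boundary. Write $d(x)=\dist(x,\partial A)$ for $x\in A$ and $\rho=\rho(A)$. We may assume $\Per(A)<\infty$ and $|A|<\infty$; otherwise the inequality is trivial, or must be read with the conventions of the paper. Let $a(t)=|\{x\in A: d(x)>t\}|$, so that $a(0)=|A|$, $a$ is nonincreasing, and $a(t)=0$ for $t\ge\rho$. Since $d$ is $1$-Lipschitz with $|\nabla d|=1$ a.e., the coarea formula gives
\[
a(t)=\int_t^{\rho} \Hau^{N-1}(\{d=s\})\,\di s .
\]
We also have $\Hau^{N-1}(\{d=s\})\ge \Per(A^s)$ up to the usual identification of the level set with the reduced boundary of the superlevel set, and the latter is bounded by $\Per(A)$ for a.e.\ $s$ by perimeter-regularity. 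Integrating, $a(t)\le \Per(A)(\rho-t)$ and $|A|\le \rho\,\Per(A)$; more usefully, $-a'(t)$ is a.e.\ comparable to $\Per(A^t)\le \Per(A)$.

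Next I take $u=\varphi(d)$ with $\varphi(0)=0$ and $\varphi$ Lipschitz and nondecreasing, so that $u\in H^1_0(A)$. By coarea,
\[
\int_A|\nabla u|^2=\int_0^\rho \varphi'(t)^2\,\Hau^{N-1}(\{d=t\})\,\di t,\qquad \int_A u^2=\int_0^\rho \varphi(t)^2\,\Hau^{N-1}(\{d=t\})\,\di t.
\]
Changing variables to $s=|A|-a(t)\in[0,|A|]$, i.e.\ parametrizing by the volume of the collar $\{d<t\}$, and setting $\psi(s)=\varphi(t)$, we get $\di s=\Hau^{N-1}(\{d=t\})\,\di t$ and $\varphi'(t)=\psi'(s)\,\Hau^{N-1}(\{d=t\})$. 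Therefore
\[
\int_A u^2=\int_0^{|A|}\psi^2\,\di s,\qquad \int_A|\nabla u|^2=\int_0^{|A|}\psi'(s)^2\,\Hau^{N-1}(\{d=t(s)\})^2\,\di s\le \Per(A)^2\int_0^{|A|}\psi'^2\,\di s,
\]
where the last step uses perimeter-regularity. I then choose $\psi(s)=\sin\!\big(\pi s/(2|A|)\big)$. This is the first mixed Dirichlet–Neumann eigenfunction on $[0,|A|]$, with $\psi(0)=0$ and free endpoint at $s=|A|$. Its Rayleigh quotient is $\pi^2/(4|A|^2)$, which yields $\lambda_1(A)\le \pi^2\Per(A)^2/(4|A|^2)$, i.e.\ \eqref{ineq:polya}.

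For strictness: if $A$ is strictly perimeter-regular, then $\Per(A^t)<\Per(A)$ for a.e.\ $t$. Since $\psi'>0$ on $[0,|A|)$, the inequality $\int\psi'^2\Hau^{N-1}(\{d=t(s)\})^2<\Per(A)^2\int\psi'^2$ is strict, provided the set of $s$ where the strict inequality holds has positive measure. This is true because $a$ is absolutely continuous and maps sets of full measure in $(0,\rho)$ to sets of full measure in $(0,|A|)$.

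The main obstacle is the technical step relating $\Hau^{N-1}(\{d=t\})$ to $\Per(A^t)$. One needs that for a.e.\ $t$ the level set $\{d=t\}$ coincides $\Hau^{N-1}$-a.e.\ with the reduced boundary of $\{d>t\}$, and that $\Per(\{d>t\})=\Per(A^t)$, i.e.\ $\{d=t\}$ is Lebesgue-null for a.e.\ $t$, which holds since $|\nabla d|=1$. The change of variables $t\mapsto s$ also needs care, because $a$ may have flat pieces where $\Hau^{N-1}(\{d=t\})=0$. I would handle both by working directly with $t$: apply the one-dimensional inequality on $[0,\rho]$ with weight $m(t)=\Hau^{N-1}(\{d=t\})$, and verify the Rayleigh bound by approximating $\psi$ with smooth functions and using the coarea formula for BV functions.
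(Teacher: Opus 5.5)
\textbf{The main case matches the paper.} For $|A|<+\infty$, $\Per(A)<+\infty$ your argument is the paper's: with $s=|A|-a(t)$, your test function $\sin\bigl(\pi s/(2|A|)\bigr)=\cos\bigl(\pi a(d)/(2|A|)\bigr)$ is exactly the paper's $w$. The Rayleigh-quotient computation and the strictness argument also coincide with the paper's.

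\textbf{The gap: the case $|A|=+\infty$.} Your opening reduction is not justified. The case $|A|=+\infty$, $\Per(A)<+\infty$ is neither trivial nor settled by any convention. The lemma is stated for arbitrary, possibly unbounded, open sets. In this case the left-hand side of \eqref{ineq:polya} is $+\infty$ unless $\lambda_1(A)=0$, so the statement there is precisely the claim $\lambda_1(A)=0$, and it must be proved. Your test function, built from $|A|$, is not even defined in this case.

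The paper treats it separately (Case 1). The isoperimetric inequality gives $|A^c|<+\infty$. Coarea plus perimeter-regularity give $|\R^N\setminus A^t|\le |A^c|+t\Per(A)<+\infty$. The cut-offs $\dist(\cdot;A^c)\,\eta_R$ then have Rayleigh quotient of order $t^{-2}$, with $t$ arbitrary.

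Your own tools give a quicker route. The coarea identity $|\{0<d<t\}|=\int_0^t\Per(A^s)\,\di s\le t\Per(A)$ holds for $t<\rho(A)$, and $|\{d=\rho(A)\}|=0$. Together these show that $\rho(A)<+\infty$ would force $|A|\le\rho(A)\Per(A)<+\infty$. Hence $\rho(A)=+\infty$, so $A$ contains balls $B_r(x)$ of arbitrarily large radius, and $\lambda_1(A)\le\lambda_1(B_1)\,r^{-2}\to 0$ by domain monotonicity.

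\textbf{A smaller slip in the main case.} In your first paragraph, $\Hau^{N-1}(\{d=s\})\ge\Per(A^s)$ is the trivial direction. Combined with $\Per(A^s)\le\Per(A)$, it gives no upper bound on $\Hau^{N-1}(\{d=s\})$. So neither $a(t)\le\Per(A)(\rho-t)$ nor the key estimate $\Hau^{N-1}(\{d=t(s)\})\le\Per(A)$ follows as written.

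What you need is the a.e.\ equality you single out at the end, and it has a short proof:
\begin{itemize}
\item $\partial\{d>s\}\subset\{d=s\}$ gives $\Per(\{d>s\})\le\Hau^{N-1}(\{d=s\})$ for every $s$.
\item The Lipschitz and BV coarea formulas show both sides have the same finite integral over $(0,\rho)$, namely $\int_A|\nabla d|=|A|$. Hence they agree for a.e.\ $s$.
\item $\Per(\{d>s\})=\Per(A^s)$ because $|\{d=s\}|=0$ for every $s$.
\end{itemize}
The paper avoids level sets altogether by writing the coarea formula directly as $M(t)=\int_t^1 L(\eta)\,\di\eta$ with $L(\eta)=\Per(A^\eta)$.

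\textbf{Flat pieces.} The flat pieces you worry about do not occur. For $0\le t_1<t_2<\rho$, the open set $\{t_1<d<t_2\}$ is nonempty, by the intermediate value theorem on a component of $A$ where $d$ exceeds $t_2$. So $a$ is strictly decreasing. In any case, the change of variables only needs $t\mapsto s(t)$ to be absolutely continuous and increasing, which it is.
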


\begin{proof}
Recalling the convention $0\cdot \infty = 0$, if $\Per(A) = +\infty$ there is nothing to prove. Then, the remaining proof is split into two cases. 
\medskip

\textit{Case $1$: $\Per(A)<+\infty$ and $|A|=+\infty$.} By the isoperimetric inequality in $\R^N$ we have $|A^c| = |\R^N\setminus A|<+\infty$, and hence by the perimeter-regularity hypothesis we infer that the set 
\[
(A^t)^c = \{x\in \R^N:\ \dist(x;A^c)< t\} = \R^N\setminus A^t
\]
satisfies
\begin{align*}
|(A^t)^c| & = |A^c| + \int_0^t \Per((A^s)^c)\, \mathrm{d}s = |A^c| + \int_0^t \Per(A^s)\, \mathrm{d}s \\
& \le |A^c| + t\Per(A) <+\infty\,.
\end{align*}
Let $\eta_R$ be a radial Lipschitz cutoff function constantly equal to $1$ on $B(0,R)$ and $0$ on $\R^N\setminus B(0,2R)$, with $|\nabla \eta|=1/R$ on $B(0,2R)\setminus \overline{B}(0,R)$. Define 
\[
u_R(x) = \dist(x;A^c) \eta_R(x)
\]
and note that $u_R\in H^1_0(A)$, with
\begin{align*}
\|u_R\|_2^2 &\ge t^2|B_R\cap A^t|
\ge t^2|B_R|/2 
\end{align*}
for all $t>0$ and for $R$ large enough (depending on $t$), while at the same time we have 
\begin{align*}
\|\nabla u_R\|_2^2 &\le \int_{A \cap (B_{2R}\setminus B_R)}| \eta\nabla \dist(x;\partial A) + \dist(x;\partial A)\nabla\eta|^2\, \mathrm{d}x\\
&\le \int_{A \cap (B_{2R}\setminus B_R)}(1 + \dist(x;\partial A)/R)^2\, \mathrm{d}x\\
&\le \int_{A \cap (B_{2R}\setminus B_R)}(1 + \diam(B_{2R})/R)^2\, \mathrm{d}x\\
&= 25|A \cap (B_{2R}\setminus B_R)|\\
&\le 25 |B_{2R}| = 25\cdot 2^N |B_R|\,.
\end{align*}
Then, if we fix $t^2\gg 25\cdot 2^{N+1}$  and choose $R$ large enough, we can conclude that $\|\nabla u_R\|_2 / \|u_R\|_2$ can be made arbitrarily small, which shows that $\lambda_1(A) = 0$ and finally proves \eqref{ineq:polya} in this case.
\medskip

\textit{Case $2$: $|A|<+\infty$ and $\Per(A)<+\infty$.} 
Since~\eqref{ineq:polya} is scale-invariant, we can assume without loss of generality that $\rho(A)=1$. Let now $M,L\colon(0,1)\to\R$ be defined as
\[
    M(t)=|A^t| \qquad \text{and} \qquad L(t)=\Per(A^t).
\]
Notice that $M$ is Lipschitz-continuous, hence differentiable almost everywhere. In particular, by the coarea formula, see, \emph{e.g.},~\cite[Chap.~13]{Mag12}, we have that
\[
    M(t) = \int_t^{1} L(\eta)\,\di\eta,
\]
from which we infer that
\begin{equation}
\label{eq:relation-M&L}
    M'(t)=-L(t), \qquad \text{for a.e.\ $t\in (0,1)$.}
\end{equation}
Further, since by assumption $A$ is perimeter-regular, we have
\begin{equation}
\label{ineq:p-reg_lem-polya}
    L(t)\le\Per(A), \qquad \text{for a.e.\ $t\in (0,1)$.}
\end{equation}

We now define the Lipschitz function
\[
w(x) = \cos\left(\frac{\pi}{2|A|}\,M\big(\dist(x;\partial A)\big)\right).
\]
We claim that $w\in H^1_0(A)$, and thus its Rayleigh quotient $\|\nabla w\|_2^2/\|w\|_2^2$ can be used to estimate $\lambda_1(A)$ from above.

Applying Fubini--Tonelli, the change of variables
\begin{equation}
\label{eq:change_variables}
s = \frac{\pi}{2|A|}M(t),
\end{equation}
and \eqref{eq:relation-M&L}, we get
\begin{align}
\label{eq:norm_w}
\int_A |w(x)|^2\,\di x
&
=
\int_0^1 \left[ \cos\left(\frac{\pi}{2|A|}\,M(t)\right) \right]^2L(t)\,\di t
\nonumber
\\
&
=
\frac{2|A|}{\pi}\int_0^{\frac\pi2}\cos^2(s)\,\di s \nonumber
\\
&=
\frac{|A|}{2}<+\infty.
\end{align}
Hence, $w$ belongs to $L^2(A)$.

On the other hand,  since both $M(-)$ and $\dist(-;\partial A)$ are continuous and almost everywhere differentiable, so is $w$, and it holds
\[
\nabla w(x) = -\sin \left(\frac{\pi}{2|A|}\,M\big(\dist(x;\partial A)\big)\right) \frac{\pi}{2|A|} M'\big(\dist(x;\partial A)\big) \nabla \dist(x;\partial A).
\]
Hence, using Fubini--Tonelli, the fact that $|\nabla\dist(-;\partial A)|$ is almost everywhere $1$, the change of variables~\eqref{eq:change_variables}, and the equations~\eqref{eq:relation-M&L} and~\eqref{ineq:p-reg_lem-polya}, we get
\begin{align}
\label{eq:norm_grad_w}
\int_A |\nabla w(x)|^2\,\di x
&
=
\left(\frac{\pi}{2|A|}\right)^2 \int_0^1 \left[ \sin\left(\frac{\pi}{2|A|}\,M(t)\right) M'(t) \right]^2L(t)\,\di t
\nonumber
\\
&
=
\frac{\pi}{2|A|} \int_0^{\frac\pi2} \sin^2(s) \Big(M'\big(t(s)\big)\Big)^2\,\di s
\nonumber
\\
&
\le
\frac{\pi^2}{8|A|} \Per(A)^2<+\infty.
\end{align}
Thus, $w$ belongs to $H^1(A)$.
Finally, notice that $w$ vanishes on $\partial A$, and therefore it belongs to the closed subspace $H^1_0(A)$, as claimed.

Thus, owing to the variational characterization of the first Dirichlet eigenvalue, using~\eqref{eq:norm_w} and~\eqref{eq:norm_grad_w}, we can immediately conclude
\begin{equation}
\label{eq:final}
    \lambda_1(A)
    =
    \inf\left\{\,\frac{\|\nabla v\|^2_2}{\| v\|^2_2}\,:\, v\in H^1_0(A)\,\right\}
    \le 
    \frac{\|\nabla w\|^2_2}{\| w\|^2_2}
    \le
    \frac{\pi^2}{4|A|^2}\Per(A)^2.
\end{equation}

Note that if $A$ is strictly perimeter-regular, then inequality~\eqref{ineq:p-reg_lem-polya} is strict on a set of positive measure, and so is the one in~\eqref{eq:norm_grad_w} as well, which, in turn, produces a strict inequality in~\eqref{eq:final}.
\end{proof}

This lemma yields the following proposition, which shows that the reverse Cheeger inequality holds for every open and bounded set $\Omega$ that admits a perimeter-regular Cheeger set.
\begin{proposition}[Reverse Cheeger inequality]
\label{prop:rev_Cheeger}
Let $\Omega\subset\R^N$ be an open and bounded set, and let $C$ be a Cheeger set for $\Omega$.
If $C$ is perimeter-regular, then
\[
    J(\Omega)\le\frac{\pi^2}{4}.
\]
Moreover, if $C$ is strictly perimeter-regular then the inequality is strict.
\end{proposition}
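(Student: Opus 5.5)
The plan is to apply \cref{lem:polya-ineq} to the Cheeger set $C$ and then combine monotonicity of $\lambda_1$ with the identity $h(\Omega)=\Per(C)/|C|$. Since $\Omega$ is bounded, $C$ has finite measure. Moreover, $|C|>0$ and $\Per(C)=h(\Omega)|C|<+\infty$, since $h(\Omega)\le \Per(B)/|B|<\infty$ for any ball $B\subset\Omega$. The lemma is stated for open sets, whereas a Cheeger set is a priori only a set of finite perimeter. I will therefore work with a suitable open representative of $C$ (for instance the set of points of density one, or the interior of its closure). Standard regularity theory for Cheeger sets guarantees that such a representative is open, has the same measure and perimeter, and is still contained in $\Omega$. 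Perimeter-regularity is a condition on the inner parallel sets $C^t$, and I will take it as given for this representative, as the statement intends.

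Applying \eqref{ineq:polya} to $C$ gives
\[
\lambda_1(C)\le \frac{\pi^2}{4}\,\frac{\Per(C)^2}{|C|^2}=\frac{\pi^2}{4}\,h(\Omega)^2 .
\]
Since $C\subset\Omega$, we have $H^1_0(C)\subset H^1_0(\Omega)$, extending functions by zero. Hence $\lambda_1(\Omega)\le\lambda_1(C)$ by the variational characterization. Dividing by $h(\Omega)^2>0$ yields $J(\Omega)\le\pi^2/4$. If $C$ is strictly perimeter-regular, the strict form of \cref{lem:polya-ineq} gives $\lambda_1(C)<\frac{\pi^2}{4}h(\Omega)^2$, and the same chain of inequalities gives strict inequality for $J(\Omega)$.

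The only delicate point I expect is the first one: making sure the Cheeger set can be taken open and inside $\Omega$, so that the lemma applies and $H^1_0(C)\subset H^1_0(\Omega)$ holds. I would handle this by invoking the known regularity of Cheeger sets: they are open up to null sets, with a boundary that is $C^{1,\alpha}$ inside $\Omega$ away from a singular set of codimension at least $8$. Alternatively, it suffices to note that the eigenvalue comparison only needs $C$ open and $C\subset\Omega$ up to a null set. The remaining steps are direct.
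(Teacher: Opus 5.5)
Your argument has the same structure as the paper's proof: pass to an open Lebesgue-equivalent representative of $C$, apply \cref{lem:polya-ineq}, then use $\lambda_1(\Omega)\le\lambda_1(C)$ and $h(\Omega)=\Per(C)/|C|$, with the strict case coming from the strict lemma. However, the point you yourself flag as delicate is handled incorrectly as written.

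Regularity of Cheeger sets holds only inside $\Omega$. So neither representative you name (the set of all density-one points of $C$, or $\interior(\overline{C})$) need be contained in $\Omega$. For a concrete case, let $\Omega$ be the unit disk $B$ minus a diameter. Then $h(\Omega)=h(B)=2$, and the Cheeger set is $\Omega$ itself up to null sets. Both of your representatives equal $B$, which is convex and hence perimeter-regular. Yet $H^1_0(B)\not\subset H^1_0(\Omega)$, and in fact $\lambda_1(\Omega)=j_{1,1}^2>\pi^2=\frac{\pi^2}{4}h(\Omega)^2$. With that choice your chain of inequalities would ``prove'' a false inequality. The same example refutes your alternative remark: an open set contained in $\Omega$ only up to a null set need not satisfy $\lambda_1(\Omega)\le\lambda_1(C)$, because $H^1_0$ is sensitive to closed null sets of positive capacity.

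The fix is exactly the paper's choice: take the density-one points of $C$ \emph{lying in} $\Omega$, i.e.\ $C^{(1)}\cap\Omega$. This set has the properties you need:
\begin{enumerate}[label=(\roman*)]
\item it is open, by the density estimates for $C$, which is a $\Lambda$-minimizer of the perimeter in $\Omega$;
\item it is contained in $\Omega$ and Lebesgue-equivalent to $C$, hence has the same measure and perimeter;
\item perimeter-regularity is to be read for this representative.
\end{enumerate}
In the example above, this representative is the union of two half-disks. Its inner parallel sets have perimeter close to $2\pi+4>\Per(\Omega)$ for small $t$, so it is not perimeter-regular and there is no conflict with the statement. With this representative, the rest of your proof goes through verbatim.
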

\begin{proof}
By the monotonicity of $\lambda_1(-)$, we have $\lambda_1(\Omega)\le\lambda_1(C)$. Since $C$ is a local lambda-minimizer of the perimeter in $\Omega$, we have that $C$ is Lebesgue-equivalent to the set $C^{(1)}$ of its Lebesgue points inside $\Omega$, which is an open set such that $|C^{(1)}| = |C|$ and $\Per(C^{(1)})=\Per(C)$. Applying \cref{lem:polya-ineq} to $C^{(1)}$, we get
%
\[
    \lambda_1(\Omega)
    \le
    \lambda_1(C^{(1)})
    \le 
    \frac{\pi^2}{4}\, \frac{\Per(C^{(1)})^2}{|C^{(1)}|^2}
    = \frac{\pi^2}{4}\, \frac{\Per(C)^2}{|C|^2}
    =
    \frac{\pi^2}{4} h(\Omega)^2,
\]
which is equivalent to the desired inequality.
If $C$ is strictly perimeter-regular, the second inequality is strict.
\end{proof}

It now follows that the inequality holds in the planar setting for sets that are at most $2$-connected. This extends the classical reverse Cheeger inequality for planar convex sets and provides the sharp constant in~\cite[Thm.~5.4]{BB24} for $k=2$.

\begin{corollary}
\label{cor:rev_cheeger_dim2}
Let $N=2$ and $\Omega$ be an open, bounded and $k$-connected set with $k=1,2$. Then, the reverse Cheeger inequality holds (strictly if $k=1$).
\end{corollary}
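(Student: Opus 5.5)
The plan is to apply \cref{prop:rev_Cheeger}, so the work reduces to showing that a Cheeger set $C$ of $\Omega$ is perimeter-regular when $k=2$, and strictly perimeter-regular when $k=1$. I would rely on the result of Makai quoted in \cref{ex:examples-2d-k-connected}: a planar $k$-connected set is perimeter-regular for $k=1,2$, and strictly so for $k=1$. The key step is therefore to transfer the connectivity of $\Omega$ to $C$. Concretely, I would show that, possibly after replacing $C$ by a suitable connected component, $C$ can be taken to be $j$-connected with $j\le k$.

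First I reduce to a connected Cheeger set. If $C$ has several components (up to null sets), the ratio $\Per(C)/|C|$ is a mediant of the ratios of its components. Hence each component is itself a Cheeger set, and I may assume $C$ is connected, using its open representative $C^{(1)}$ as in the proof of \cref{prop:rev_Cheeger}.

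Second, I fill in holes. Any bounded component of $\R^2\setminus \overline C$ that is contained in $\Omega$ can be added to $C$. This increases the area and does not increase the perimeter, since the boundary of the hole is part of $\partial C$. So it contradicts minimality of $\Per/|\cdot|$ unless the hole is null. It follows that every bounded complementary component of $C$ must contain a point of $\R^2\setminus\Omega$. Since connected complementary pieces of $\Omega$ cannot be split across different components of $\R^2\setminus C$ (as $C\subset\Omega$), each component of $\R^2\setminus\Omega$ lies in exactly one component of $\R^2\setminus C$. Counting components then gives that the number of complementary components of $C$ is at most $k$.

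Finally, I conclude in each case. For $k=1$, $C$ is simply connected and hence strictly perimeter-regular, so the strict inequality follows from \cref{prop:rev_Cheeger}. For $k=2$, $C$ is $1$- or $2$-connected and in either case perimeter-regular, which gives the non-strict inequality.

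The main obstacle is regularity. I must justify that $C$ (or $C^{(1)}$) is regular enough for Makai's theorem and for the topological counting, which requires a closed set with well-behaved complement. Here I would invoke the known regularity of planar Cheeger sets: $\partial C\cap\Omega$ consists of arcs of circles of radius $1/h(\Omega)$, so $\overline{C}$ is a finitely connected closed set. In the second step I would also have to make sure that filling a hole does not leave $\Omega$; this is exactly what the argument through the components of $\R^2\setminus\Omega$ guarantees.
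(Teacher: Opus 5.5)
Your proposal is correct and follows essentially the same route as the paper. Both arguments reduce to a connected Cheeger set, fill any hole of $C$ that contains no part of $\R^2\setminus\Omega$ (which would strictly decrease $\Per(C)/|C|$), conclude that $C$ is at most $k$-connected, and then combine Makai's result from \cref{ex:examples-2d-k-connected} with \cref{prop:rev_Cheeger}; your mediant argument for connectedness, your component count via $\R^2\setminus\Omega$, and your remark on the regularity of planar Cheeger sets make explicit details that the paper leaves implicit.
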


\begin{proof}
Note that if $\Omega$ is $k$-connected, then any of its connected components is at most $k$-connected.  
Let $C$ be a Cheeger set of $\Omega$. We can assume that $C$ is connected, hence contained in one of the connected components of $\Omega$ (that we identify with $\Omega$ itself). We claim that $C$ is at most $k$-connected.

Assume by contradiction that $C$ is $j$-connected with $j>k$, and let $(\partial C)_i$ with $i=1,\dots,j$ be the connected components of its boundary, while $(\partial \Omega)_i$ with $i=1,\dots, k$ are the connected components of the boundary of $\Omega$. Up to relabeling, the set bounded by $(\partial C)_i$ contains the one bounded by $(\partial \Omega)_i$, for all $i=1,\dots,\bar{k}$ and for some $1\le\bar{k}\le k$. Then, if we let $E_i$ be the set bounded by $(\partial C)_i$ with $i=\bar{k}+1,\dots,j$, we can take 
\[
C'=C\cup E_{\bar{k}+1}\cup\dots\cup E_j,
\]
which is a subset of $\Omega$. It is immediate to see that $\frac{\Per(C')}{|C'|}<\frac{\Per(C)}{|C|}$, contradicting the assumption that $C$ is a Cheeger set.

Therefore, by \cref{ex:examples-2d-k-connected} we know that $C$ is perimeter-regular (strictly perimeter-regular if $k=1$), which concludes the proof by virtue of \cref{prop:rev_Cheeger}.
\end{proof}

\begin{remark}
Since spherical shells and toroidal sets are perimeter-regular and self-Cheeger (see \cite{KLV19}), by \cref{prop:rev_Cheeger} we infer that they also satisfy \eqref{eq:Buser}.
\end{remark}

\subsection{Maximizing sequences}
\label{subs:maxseq}

The strict reverse Cheeger inequality is optimal for convex sets, see, \emph{e.g.},~\cite[Prop.~4.1]{Par17} for the planar case and~\cite[Sect.~4]{Bra20} for the general $N$-dimensional case.
In fact, in these works it is shown that the sequence of $N$-dimensional parallelepipeds $(\Omega_n)_n$ defined as
\[
\Omega_n =(0,1)\times\left(-\frac{n}{2},\frac{n}{2}\right)^{N-1}
\]
is such that $J(\Omega_n)\to\pi^2/4$ as $n\to+\infty$.
In order to prove this fact, it suffices to notice that
\begin{equation}
\label{eq:h_parallelepiped}
h(\Omega_n)\le\frac{\Per(\Omega_n)}{|\Omega_n|}=\frac{2n^{N-1}+(2N-2)n^{N-2}}{n^{N-1}}=2+\frac{2N-2}{n},    
\end{equation}
while on the other hand, by separation of variables,
\begin{equation}
\label{eq:lambda_parallelepiped}
\lambda_1(\Omega_n)=\pi^2\left(1+\frac{N-1}{n^2}\right).
\end{equation}
Therefore, by combining \eqref{eq:h_parallelepiped} and \eqref{eq:lambda_parallelepiped} we find that
\[
\liminf_{n\to+\infty} J(\Omega_n)\ge\lim_{n\to+\infty}\pi^2\left(1+\frac{N-1}{n^2}\right)\left(2+\frac{2N-2}{n}\right)^{-2}=\frac{\pi^2}{4}\,,
\]
which together with \cref{prop:rev_Cheeger} implies that $J(\Omega_n)\to \pi^2/4$ as $n\to+\infty$.

The following proposition, proved in~\cite[Prop.~4.1]{Par17}, provides a full characterization for a sequence to be maximizing in dimension $N=2$.

\begin{proposition}
\label{prop:parini-saturate}
    Let $(\Omega_n)_n\subset\BK_2$ be a sequence with constant \emph{minimal width} $w(\Omega_n)$ (see \cref{subsec:principal_widths} for the definition of $w(-)$).
    Then, $(\Omega_n)_n$ is maximizing for the functional $J$ if and only if $\diam(\Omega_n)\to+\infty$.
\end{proposition}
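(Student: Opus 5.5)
We normalize $w(\Omega_n)=1$ throughout; this is allowed because $J$ is scale invariant. The proof splits into the two implications.

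\emph{Sufficiency.} Assume $\diam(\Omega_n)\to+\infty$. The plan is to sandwich $\Omega_n$ between rectangles.

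First we bound $\lambda_1(\Omega_n)$ from below. Since the minimal width is $1$, $\Omega_n$ lies in a strip of width $1$. By monotonicity of $\lambda_1$ with respect to inclusion, and since the strip behaves like a limit of rectangles $(0,1)\times(-L,L)$, we get $\lambda_1(\Omega_n)\ge\pi^2$. One can see this either by separating variables or by applying the one-dimensional Poincar\'e inequality on each line orthogonal to the strip.

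Next we bound $h(\Omega_n)$ from above. Let $[p_n,q_n]$ be a diameter segment, of length $d_n\to\infty$. Take a point $z_n\in\Omega_n$ at maximal distance from the line through $p_n$ and $q_n$, on the side where this distance is largest. Since the width of $\Omega_n$ in the direction orthogonal to the diameter is at least $w(\Omega_n)=1$, this distance is at least $1/2$. In fact we want the full width: $\Omega_n$ contains the convex hull of $p_n$, $q_n$ and two points $z_n^\pm$ on opposite sides, at total height $H_n\ge 1$ in the orthogonal direction.

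The key geometric step is to show that $\Omega_n$ contains a rectangle $[0,a_n]\times[0,b_n]$ with $a_n\to1$ and $b_n\to\infty$. The difficulty is that the convex hull of $p_n,q_n,z_n^\pm$ is a quadrilateral, and its largest inscribed rectangle has height only about $H_n/2$, not about $1$. A better approach is to use that $\Omega_n$ is squeezed between two parallel support lines at distance $w_n'\ge1$ orthogonal to the direction of minimal width. I would then try to show that $\Omega_n$ is close, in the Hausdorff sense and after translation, to a long set of bounded width in which most cross-sections have length close to $1$.

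I expect this to be the main obstacle. Convexity only gives that the cross-section length is a concave function of position along the long axis, and it can be a "tent" function, as in a long thin rhombus. A rhombus of width $1$ and diameter $\to\infty$ has $J\to\pi^2/4$ by Parini's result, yet it contains no long $1\times b$ rectangle. So instead of inscribing rectangles, I would estimate $h$ through the Cheeger set directly. Use the test set $E=\Omega_n$ itself, or a truncation. Let $\ell(s)$ be the concave cross-section length along the long axis, of length $d_n$, with $\max\ell\le1$ up to errors. Then
\[
|\Omega_n|=\int\ell\,ds \quad\text{and}\quad \Per(\Omega_n)\le 2d_n+O(1).
\]
For a tent profile this gives $|\Omega_n|\approx d_n/2$, so the ratio is about $4$, which is too large. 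Hence the lower bound $\lambda_1\ge\pi^2$ must also be refined, using the variable local width: $\lambda_1\gtrsim \pi^2/\ell(s)^2$ on the thicker part. Both quantities must be compared to the same scale.

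The practical route for sufficiency is therefore as follows. Test $h$ with subsets $E_n$ where $\ell\ge\max\ell-\varepsilon$. By concavity this region has length proportional to $d_n$, so its rectangle-like part yields $h\le 2/(\max\ell-\varepsilon)+o(1)$. Then bound $\lambda_1$ from below by $\pi^2/(\max\ell)^2$, using the strip of width equal to the width in the orthogonal direction, which is comparable to $\max\ell$ up to $o(1)$ for long sets. Letting $\varepsilon\to0$ gives the desired limit.

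\emph{Necessity.} Suppose instead that $\diam(\Omega_n)$ stays bounded along a subsequence. With $w=1$, Blaschke selection gives a Hausdorff limit $\Omega\in\BK_2$. Both $\lambda_1$ and $h$ are continuous under Hausdorff convergence of convex bodies, so
\[
J(\Omega_n)\to J(\Omega)<\pi^2/4,
\]
where the strict inequality comes from \cref{prop:rev_Cheeger} together with the strict perimeter-regularity of convex sets (\cref{ex:examples-convex}). Hence such a sequence cannot be maximizing.
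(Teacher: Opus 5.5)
The paper gives no proof of this proposition; it quotes it from \cite{Par17}. I have therefore checked your argument on its own terms.

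\textbf{Necessity is fine.} After translating, Blaschke selection gives a limit $\Omega$ with $w(\Omega)=1$ by \cref{lem:Hausdorff-p-widths}, so $\Omega$ is a convex body. Both $\lambda_1$ and $h$ pass to the limit. Finally $J(\Omega)<\pi^2/4$, because the Cheeger set of a convex body is convex and hence strictly perimeter-regular.

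\textbf{Sufficiency has a genuine gap.} It sits at the one step that carries all the content: putting the lower bound on $\lambda_1$ and the upper bound on $h$ at the same scale. You take cross-sections orthogonal to the diameter, and you assert that the width in that orthogonal direction is $\max\ell+o(1)$ ``for long sets''. This is false. Consider the parallelogram $P_d=\coh\{(0,0),(0,1),(d,0),(d,-1)\}$:
\begin{itemize}
\item its minimal width is $d/\sqrt{1+d^2}\to1$;
\item its diameter is the diagonal from $(0,1)$ to $(d,-1)$;
\item its width orthogonal to that diagonal is $2d/\sqrt{d^2+4}\to2$;
\item every chord orthogonal to that diagonal has length $d\sqrt{d^2+4}/(d^2+2)<1$.
\end{itemize}
So your strip only gives $\lambda_1\gtrsim\pi^2/4$, which yields at best $\liminf J\ge\pi^2/16$.

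The lower bound must instead come from the minimal-width strip, $\lambda_1(\Omega_n)\ge\pi^2$. You then need cross-sections of length close to $1$ \emph{in the minimal-width direction $\nu$}. That is exactly the double-normal property: there exist $q,p\in\Omega_n$ with $p-q=w\,\nu$, which is the case $i=1$ of \cref{prop:realization_principal_width}. Nothing in your proposal supplies this, so $\max\ell\ge1-o(1)$ remains unjustified.

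\textbf{How to close the gap.} With the double normal in hand, the inscribed-rectangle route you discarded works directly. Your rhombus objection only rules out rectangles of height exactly $1$, and those are not needed: a long rhombus of width $1$ contains $[0,1-\delta]\times[0,\delta d_n]$ for every $\delta\in(0,1)$. Concretely:
\begin{itemize}
\item Place $q=(0,0)$ and $p=(1,0)$, so that $\Omega_n\subset\{0\le x_1\le 1\}$.
\item Pick $z\in\Omega_n$ with $|z_2|\ge S_n:=\tfrac12\sqrt{d_n^2-1}$; one endpoint of a diameter always works.
\item The triangle $\coh\{q,p,z\}$ contains a rectangle with side $1-b/S_n$ along $e_1$ and side $b$ along $e_2$.
\item Taking $b=\sqrt{S_n}$ gives $h(\Omega_n)\le 2/(1-S_n^{-1/2})+2S_n^{-1/2}\to2$. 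Combined with $\lambda_1\ge\pi^2$, this gives $\liminf_n J(\Omega_n)\ge\pi^2/4$.
\end{itemize}
This square-root trade-off is the same device the paper uses in \cref{lem:rhomboid} and \cref{thm:smallrelwidth}.

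Your superlevel-set variant also works, provided $\ell$ is measured along $\nu$, so that $\ell\le 1=\max\ell$. You would then need to write out the perimeter bound for the truncated piece. Its two boundary arcs are graphs of a concave and a convex function with values in an interval of length $1$. Hence each arc has length at most $|I_n|+2$, and so $\Per\le 2|I_n|+6$ while the area is at least $(1-\varepsilon)|I_n|$.
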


In higher dimensions, the conclusion of \cref{prop:parini-saturate} is no longer true, as shown by the next example.

\begin{example}
\label{ex:cylinders}
Let $N\ge3$ and $R>0$, and consider the sequence $(\Omega_n)_n$ of cylinders in $\mathbb{R}^{N}$, where
\[
\Omega_n = B^{N-1}_{R}(0) \times (0, n).
\]
These sets have constant minimal width $w(\Omega_n)=2R$ for every $n$ sufficiently large, whereas the corresponding sequence of diameters is unbounded. Let us estimate both $h(\Omega_n)$ and $\lambda_1(\Omega_n)$.

By~\cite[Thm.~2.1]{PS25}, if $(S_n)_{n}$ is a sequence of $N$-dimensional cylinders of the form $S_n=\Sigma\times(0,n)$ for some $\Sigma\subset\R^{N-1}$, when $n\to+\infty$ it holds that \(h(S_n)\to h(\Sigma)\). Therefore, here we have
\begin{equation}
\label{eq:asympt-h}
    h(\Omega_n)
    \to h(B^{N-1}_{R}(0))
    = 
    \frac{N-1}{R}.
\end{equation}

On the other hand, by separation of variables, one can see that the first Dirichlet eigenvalue of $\Omega_n$ is
\[
    \lambda_{1}(\Omega_n)
    = j^2_{\frac{N-3}{2}, 1}\, R^{-2} + \frac{\pi^2}{n^2},
\]
where $j_{m,n}$ denotes the $n$-th positive zero of the Bessel function of order $m$. Hence, we have the asymptotic behavior
\begin{equation}
\label{eq:asympt-lambda1}
\lambda_1(\Omega_n)\to j^2_{\frac{N-3}{2}, 1} R^{-2}.
\end{equation}

Combining~\eqref{eq:asympt-h} and~\eqref{eq:asympt-lambda1} shows that
\[
    \lim_{n\to+\infty}J(\Omega_n)=\left(\frac{j_{\frac{N-3}{2},1}}{N-1}\right)^2<\frac{\pi^2}{4}, \qquad \forall N\ge3.
\]
Thus, $(\Omega_n)_n$ is not a maximizing sequence, despite having constant minimal width and diameter divergent to infinity.
\end{example}

Given a sequence $(\Omega_n)_n\subset\BK_N$ with these properties, each set $\Omega_n$ has two independent directions in $\R^N$ associated to $w^{(1)}(\Omega_n)$ and $\diam(\Omega_n)$, respectively.
If $N=2$, these two directions form a basis of the space, whereas in dimension $N>2$ there are extra $N-2$ independent directions along which, in general, one has no information on the behavior of the set.


\section{Principal widths of a convex set}
\label{sec:rel_width}

In this section, we introduce the concept of \emph{principal widths} of a convex set $\Omega\subset\R^N$ and we prove some important properties.
Further, we define the \emph{relative width} of $\Omega$, which will be crucial in the next section.

Here and throughout this section, unless explicitly stated, we will assume $\Omega\in\K_N$ for some $N\ge2$.

\subsection{Principal widths}
\label{subsec:principal_widths}

We begin by giving the definition of the \emph{principal widths} of $\Omega$, which are the tools we use to formalize the idea 

The \emph{support function of $\Omega$}, denoted\footnote{This is most commonly denoted by $h_\Omega(-)$, but we prefer to use as notation $\sigma_\Omega(-)$ in order to avoid any possible confusion with the Cheeger constant $h(-)$.} by  $\sigma_\Omega:\mathbb S^{N-1}\to \mathbb R\cup\{+\infty\}$, is defined as
\[
\sigma_\Omega(\nu)=\sup_{x\in \Omega}\, \langle x, \nu\rangle,
\]
and it can be extended by positive $1$-homogeneity to the punctured space $\mathbb{R}^N\setminus\{0\}$; for the basic properties of $\sigma_\Omega$ we refer to~\cite[Sect.~1.7.1]{Sch14}. We here just remark that $\sigma_\Omega$ is lower semicontinuous, being the supremum of continuous functions; if $\Omega$ is bounded, then $\sigma_\Omega$ is continuous, see, \emph{e.g.},~\cite[Cor.~2.2]{HW20}.

Given a direction $\nu$, we define the \emph{supporting half-space to $\Omega$ with outer unit normal $\nu$} as
\[
H_\nu^{-}(\Omega) 
= 
\{\,x\in\mathbb{R}^N\,:\, \langle x,\nu\rangle \le \sigma_\Omega(\nu)\,\}.
\]
If $\sigma_\Omega(\nu)$ is finite, then we can consider the boundary of this half-space,
\begin{equation}
\label{eq:supporting_hyperplane}
H_\nu(\Omega) 
= 
\{\,x\in\mathbb{R}^N\,:\, x\cdot\nu = \sigma_\Omega(\nu)\,\},
\end{equation}
called the \emph{supporting hyperplane to $\Omega$ with outer unit normal $\nu$}. 
Then, $-\sigma_\Omega(\nu)$ represents the signed distance from the origin of $H_\nu(\Omega)$.
In particular, this distance is positive if $H_\nu^-(\Omega)$ does not contain the origin and negative if it does.
Incidentally, we recall that
\begin{equation}
\label{eq:intersection-supp-hyperplanes}
\Omega = \bigcap_{\mathbb{S}^{N-1}} H^-_{\nu}(\Omega).
\end{equation}
Further, we define the \emph{support set of $\Omega$ with outer unit normal $\nu$} as
\begin{equation}
\label{eq:face}
F(\Omega, H_\nu(\Omega)) = \Omega\cap H_\nu(\Omega),
\end{equation}
that is, roughly speaking, the ``face'' of the set consisting of points whose outer unit normal is $\nu$.

The \emph{directional width of $\Omega$ in direction $\nu$}, denoted by $w_{\nu}(\Omega)$, is defined as
%
\[
w_\nu(\Omega)=\sigma_\Omega(\nu)+\sigma_\Omega(-\nu) \in [0,+\infty].
\]
%
If $w_\nu(\Omega)$ is finite, then it represents the distance between the two supporting hyperplanes to $\Omega$ orthogonal to the direction $\nu$, $H_{\nu}(\Omega)$ and $H_{-\nu}(\Omega)$. On the other hand, if $w_\nu(\Omega)$ is infinite, then at least one of the supporting half-spaces with outer unit normal either $\nu$ or $-\nu$ is the whole space $\mathbb{R}^N$.

Finally, the \emph{minimal width} or \emph{thickness} is the minimum of the directional widths among all directions, \emph{i.e.},
\begin{equation}
\label{eq:minimal_w}
w(\Omega)=\inf_{\nu\in\mathbb S^{N-1}}w_\nu(\Omega).
\end{equation}
We stress that this is a minimum by Weierstrass Theorem, since the function $\nu\mapsto \sigma_\Omega(\nu)$ is lower semicontinuous as mentioned before, so is $\nu\mapsto w_\nu(\Omega)$.

We now define the $N$ principal widths of a nonempty, closed, and convex set $\Omega$ as follows.

\begin{definition}[Principal widths]
\label{def:p-widths}
Let $\Omega\in\K_N$. We start by setting $w^{(1)}(\Omega) = w(\Omega)$ and
\[
V^{(1)}(\Omega) = \{u^{(1)}\in \Sf^{N-1}:\ w_{u^{(1)}}(\Omega) = w^{(1)}(\Omega)\}.
\]
For the next step, we define
\[
w^{(2)}(\Omega) = \inf \{w_\nu(\Omega):\ \nu \in \Sf^{N-1},\ \langle \nu, u^{(1)}\rangle = 0,\ u^{(1)}\in V^{(1)}(\Omega)\}
\]
and
\[
V^{(2)}(\Omega) = \{(u^{(1)},u^{(2)})\in V^{(1)}\times\Sf^{N-1} : \langle u^{(1)},u^{(2)}\rangle=0,\ w_{u^{(2)}}(\Omega) = w^{(2)}(\Omega)\} .
\]
Then, we proceed recursively as $i=3,\dots,N$, so that for each step we get a (possibly infinite) value $w^{(i)}(\Omega)$ and a set of $i$-uples $V^{(i)}\subset(\Sf^{N-1})^i$ such that
\[
    \forall (u^{(1)},\dots,u^{(i)})\in V^{(i)}, \quad \forall j,k=1,\dots,i \qquad \begin{cases}
        \langle u^{(j)} , u^{(k)} \rangle = \delta_{jk} \\
        w_{u^{(j)}}(\Omega)=w^{(j)}(\Omega) .
    \end{cases}
\]
Each value $w^{(i)}(\Omega)$ is called the \emph{$i$-th principal width of} $\Omega$.
Moreover, when we reach the last step $i=N$, every $N$-uple in the space $V^{(N)}(\Omega)$ is called a \emph{set of principal directions of} $\Omega$, denoted by $(\nu^{(1)}(\Omega),\dots,\nu^{(N)}(\Omega))$.

For the sake of notation, whenever the set $\Omega$ is fixed, we shall simply write $w^{(i)}$, $\nu^{(i)}$ and $V^{(i)}$, dropping the explicit dependence on $\Omega$.
\end{definition}

\begin{remark}
\label{rem:well-posedness}
The definitions of $V^{(i)}$ and of principal widths $w^{(i)}$ are canonical. Indeed, for every $i=2,\dots,N$ the infimum defining $w^{(i)}$ is always attained, even when it is $+\infty$.
\end{remark}

\begin{remark}[Principal directions]
\label{rmk:principal_directions}
Unlike principal widths, principal directions are not uniquely determined by the set $\Omega\in\K_N$ at hand (in other words, $V^{(N)}$ may contain more than one basis of principal directions).
For instance, if $\Omega$ is an $N$-dimensional ball, then every orthonormal basis of $\R^N$ is trivially a set of principal directions of $\Omega$.

On the other hand, not every vector $u\in\mathbb{S}^{N-1}$ that realizes a principal width $w^{(i)}$ is necessarily a principal direction.
For example, if we consider a set $\Omega\in\K_2$ like in \cref{img:multiple_infimum}, we can easily see that the vector $u^{(1)}=e_2$ belongs to $V^{(1)}$ but does not realize the infimum that defines $w^{(2)}$.
Indeed, the infimum
\[
w^{(2)}(\Omega) = \inf \{w_\nu(\Omega):\ \nu \in \Sf^{N-1},\ \langle \nu, u^{(1)}\rangle = 0,\, u^{(1)}\in V^{(1)}(\Omega)\}
\]
is attained by taking, for example, $u^{(1)}=(e_1+e_2)/\sqrt{2}$ and $\nu=(e_1-e_2)/\sqrt{2}$, whereas $u^{(1)}=e_2$ does not realize it, because with that choice we would get $\nu=\pm e_1$ and thus $w_\nu(\Omega)=\diam(\Omega)>w^{(2)}(\Omega)$.
\end{remark}

\begin{figure}[ht]
\centering
\begin{tikzpicture}[scale=0.4]
  \coordinate (O) at (-0.00, 0.00);
  \coordinate (K) at (-6.00, 0.00);
  \coordinate (L) at (6.00, 0.00);
  \coordinate (M) at (0.00, 4.00);
  \coordinate (N) at (0.00, -4.00);
  \coordinate (A) at ($(O) + (45:3cm)$);
  \coordinate (B) at ($(O) + (315:3cm)$);
  \coordinate (C) at ($(A) + (315:3cm)$);

  \draw[ultra thin, ->] (K) -- (L);
  \draw[ultra thin, ->] (N) -- (M);
  \draw[thick] (A) arc (45:315:3cm);
  \draw[thick] (A) -- (C) -- (B);
  \draw[thick, blue, ->] (O) -- (A) ;
  \draw[thick, blue, ->] (O) -- (B);
  \node[above right, blue] at (A) {$\nu^{(1)}$};
  \node[below right, blue] at (B) {$\nu^{(2)}$};
\end{tikzpicture}
\caption{The example considered in \cref{rmk:principal_directions}.}
\label{img:multiple_infimum}
\end{figure}

\begin{remark}[Canonical basis]
\label{rem:canonical_basis}
Since by definition every set of principal directions $(\nu^{(1)},\dots,\nu^{(N)})\in V^{(N)}$ is an orthonormal basis of $\mathbb{R}^N$, one can always assume up to an isometry that $\nu^{(i)} = e_i$.
\end{remark}

We collect some of the properties of the principal widths in the next proposition, which is essentially straightforward from the definition.

\begin{proposition}
\label{prop:properties_p-widths}
Let $\Omega\in\K_N$. We have the following:
\begin{enumerate}[label=(\roman*)]
    \item $w^{(i)} \le w^{(i+1)}$, for all $i=1,\dots,N-1$;
    \item $\Omega$ is contained in a (possibly unbounded) $N$-dimensional parallelepiped with edges directed as $\nu^{(i)}$ and of lengths $w^{(i)}$;
    \item $\Omega$ is bounded if and only if $w^{(N)}<+\infty$.
\end{enumerate}
\end{proposition}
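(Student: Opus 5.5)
The three items follow directly from \cref{def:p-widths}; I will treat them in order, fixing a set of principal directions $(\nu^{(1)},\dots,\nu^{(N)})\in V^{(N)}$ and taking $\nu^{(i)}=e_i$ as in \cref{rem:canonical_basis}.

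For (i), fix $i\le N-1$ and let $(u^{(1)},\dots,u^{(i+1)})\in V^{(i+1)}$. By construction its truncation $(u^{(1)},\dots,u^{(i)})$ lies in $V^{(i)}$. Moreover, $u^{(i+1)}$ is a unit vector orthogonal to $u^{(1)},\dots,u^{(i-1)}$, so it is an admissible competitor in the infimum defining $w^{(i)}$. Hence
\[
w^{(i)}\le w_{u^{(i+1)}}(\Omega)=w^{(i+1)}.
\]
The case $i=1$ is even simpler: $w^{(1)}$ is the infimum over all of $\Sf^{N-1}$.

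The one delicate point is that the recursive infimum is over a constrained family, so I must check that the admissible competitors at step $i+1$ form a subset of those at step $i$. This holds because every tuple in $V^{(i+1)}$ extends a tuple in $V^{(i)}$. I will also rely on \cref{rem:well-posedness} for attainment, which guarantees that $V^{(i)}$ is nonempty and so the chain makes sense even when some values are $+\infty$.

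For (ii), use the intersection representation \eqref{eq:intersection-supp-hyperplanes} restricted to the $2N$ directions $\pm e_i$:
\[
\Omega\subset\bigcap_{i=1}^N\big(H^-_{e_i}(\Omega)\cap H^-_{-e_i}(\Omega)\big)=\prod_{i=1}^N\,[-\sigma_\Omega(-e_i),\,\sigma_\Omega(e_i)],
\]
with the convention that an interval is unbounded on the side where $\sigma_\Omega$ is infinite. Each factor has length $\sigma_\Omega(e_i)+\sigma_\Omega(-e_i)=w_{e_i}(\Omega)=w^{(i)}$, as required.

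For (iii), suppose first that $w^{(N)}<+\infty$. Then by (i) all the $w^{(i)}$ are finite, and by (ii) $\Omega$ lies in a bounded box, so it is bounded. Conversely, if $\Omega$ is bounded, then $\sigma_\Omega$ is finite everywhere, so every directional width is finite. In particular $w^{(N)}=w_{\nu^{(N)}}(\Omega)<+\infty$.
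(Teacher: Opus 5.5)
Your proof is correct and follows the paper's argument: (i) comes from the nesting of the admissible sets in the recursive infimum, (ii) from restricting the intersection of supporting half-spaces to the directions $\pm\nu^{(i)}$, and (iii) from (i), (ii) and the finiteness of $\sigma_\Omega$ for bounded sets. Your appeal to attainment (\cref{rem:well-posedness}) in (i) is harmless but unnecessary, since every competitor for $w^{(i+1)}$ is already a competitor for $w^{(i)}$, so you can compare the infima directly.
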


\begin{proof}
Point~(i) follows from the definition, since the space over which we take the infimum is restricted at each step.

Point~(ii) follows from~\eqref{eq:intersection-supp-hyperplanes}, restricting the intersection to the pairwise orthogonal half-spaces in directions $\pm \nu^{(i)}$.

For point~(iii), we prove the two implications separately. On the one hand, if $\Omega$ is bounded, then $w_\nu(\Omega)<+\infty$ for all directions, and therefore $w^{(N)}$ must be finite. On the other hand, if $w^{(N)}<+\infty$, then by point~(ii) it follows that $\Omega$ is contained in a bounded $N$-dimensional parallepiped.
\end{proof}

We now wish to prove that there exist pairs of boundary points suitably related to the principal widths. Before stating and proving this property, we need the following preliminary lemma. It is an adaptation of~\cite[Lem.~4.1]{FLS-of}, where the statement is proved only for $i=1$.
\begin{lemma}
\label{lem:Hausdorff-p-widths}
Let $(\Omega_n)_n\subset\BK_N$ be a sequence convergent to some $\Omega\in\BK_N$ in the Hausdorff metric. Then, the $i$-th principal width is continuous, that is,
\[
w^{(i)}(\Omega_n) \to w^{(i)}(\Omega).
\]
Additionally, if $(\nu_n^{(i)})_n \subset \mathbb{S}^{N-1}$ is a sequence of $i$-th principal directions of $(\Omega_n)_n$, any of its limit points $\nu^{(i)}$ is an $i$-th principal direction of $\Omega$.
\end{lemma}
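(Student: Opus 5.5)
The plan is to reduce everything to the uniform convergence of support functions. For convex bodies, Hausdorff convergence $\Omega_n\to\Omega$ is equivalent to $\sup_{\nu\in\Sf^{N-1}}|\sigma_{\Omega_n}(\nu)-\sigma_\Omega(\nu)|\to0$; in fact this supremum equals the Hausdorff distance (see~\cite[Lem.~1.8.14]{Sch14}). Since $w_\nu=\sigma(\nu)+\sigma(-\nu)$, we get
\[
\sup_{\nu\in\Sf^{N-1}}|w_\nu(\Omega_n)-w_\nu(\Omega)|\le 2\,d_H(\Omega_n,\Omega)\to0 .
\]
Each $w_\nu(\Omega)$ is continuous in $\nu$, because $\Omega$ is bounded. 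Everything that follows is an induction on $i$, with the inductive statement being:
\begin{itemize}
\item[(a)] $w^{(j)}(\Omega_n)\to w^{(j)}(\Omega)$ for all $j\le i$;
\item[(b)] every limit point of a sequence of $i$-tuples $(u_n^{(1)},\dots,u_n^{(i)})\in V^{(i)}(\Omega_n)$ lies in $V^{(i)}(\Omega)$.
\end{itemize}
All the sets $V^{(i)}$ are closed subsets of the compact space $(\Sf^{N-1})^i$, being defined by continuous equalities.

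\emph{Base case $i=1$.} This is~\cite[Lem.~4.1]{FLS-of}, and it also follows directly from the uniform estimate. Indeed, $\min_\nu w_\nu$ is $1$-Lipschitz with respect to the sup norm of $\nu\mapsto w_\nu$. Moreover, if $u_n^{(1)}\to u$ with $w_{u_n^{(1)}}(\Omega_n)=w^{(1)}(\Omega_n)$, then uniform convergence together with the continuity of $w_\bullet(\Omega)$ gives $w_u(\Omega)=\lim_n w^{(1)}(\Omega_n)=w^{(1)}(\Omega)$. Hence $u\in V^{(1)}(\Omega)$.

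\emph{Inductive step, lower bound.} Assume (a) and (b) hold up to $i-1$. Pick $(u_n^{(1)},\dots,u_n^{(i)})\in V^{(i)}(\Omega_n)$ realizing $w^{(i)}(\Omega_n)$, which is possible by \cref{rem:well-posedness}. Pass to a subsequence achieving $\liminf_n w^{(i)}(\Omega_n)$ and converging to some $(u^{(1)},\dots,u^{(i)})$. By (b), $(u^{(1)},\dots,u^{(i-1)})\in V^{(i-1)}(\Omega)$. Orthonormality passes to the limit, and uniform convergence gives
\[
w_{u^{(i)}}(\Omega)=\lim_n w^{(i)}(\Omega_n).
\]
Since $u^{(i)}$ is admissible in the infimum defining $w^{(i)}(\Omega)$, this yields $\liminf_n w^{(i)}(\Omega_n)\ge w^{(i)}(\Omega)$. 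The same argument gives (b) at level $i$ once the matching upper bound is known, because then $w_{u^{(i)}}(\Omega)=w^{(i)}(\Omega)$.

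\emph{Inductive step, upper bound (main obstacle).} We need $\limsup_n w^{(i)}(\Omega_n)\le w^{(i)}(\Omega)$. The natural attempt is to take an optimal tuple $(v^{(1)},\dots,v^{(i)})\in V^{(i)}(\Omega)$ and approximate it by admissible tuples for $\Omega_n$. The difficulty is that the constraint sets $V^{(i-1)}(\Omega_n)$ are only upper semicontinuous: (b) controls their limit points but does not say that every element of $V^{(i-1)}(\Omega)$ is approximated by them. If $V^{(1)}(\Omega)$ contains several minimizing directions, a perturbation may select only one of them, and that one may not be the direction leading to the optimal $w^{(2)}(\Omega)$.

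I would first try to show lower semicontinuity of $V^{(i-1)}$ in the relevant situation. Concretely, I would show that every $(i-1)$-tuple in $V^{(i-1)}(\Omega)$ that is optimal for the next step is a limit of tuples in $V^{(i-1)}(\Omega_n)$, by exploiting the near-minimality of $w_\bullet(\Omega_n)$ at points close to the minimizers of $w_\bullet(\Omega)$. If this fails in general (the example of \cref{rmk:principal_directions} suggests it might), I would weaken the claim. The fallback is that $w^{(i)}$ is lower semicontinuous, and that the limit tuples are ``principal'' in the sense of (b). I would check whether this weaker form suffices for the later applications in \cref{sec:maximizing_sequences}.
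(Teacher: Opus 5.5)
The upper bound you could not prove is in fact false for $i\ge2$, so the lemma as stated is false; your suspicion that the example of \cref{rmk:principal_directions} breaks it is correct. Normalize that body as $\Omega=\coh\big(\overline B_1\cup\{(\sqrt2,0)\}\big)\subset\R^2$. Then $\sigma_\Omega(\nu)=\max\{1,\sqrt2\,\nu_1\}$, so $w_\nu(\Omega)=2$ when $|\nu_1|\le1/\sqrt2$ and $w_\nu(\Omega)=1+\sqrt2|\nu_1|$ otherwise. Hence $V^{(1)}(\Omega)$ is a pair of arcs, and $w^{(2)}(\Omega)=2$ is attained only with $u^{(1)}=(\pm1,\pm1)/\sqrt2$.

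Now set $\Omega_n=\Omega\cap\{|x_2|\le1-1/n\}$, which converges to $\Omega$ in the Hausdorff metric. Compare with the truncated disc $\overline B_1\cap\{|x_2|\le1-1/n\}\subset\Omega_n$: this shows $w_\nu(\Omega_n)>2-2/n=w_{e_2}(\Omega_n)$ for every $\nu\ne\pm e_2$. So $V^{(1)}(\Omega_n)=\{\pm e_2\}$ and
\[
w^{(2)}(\Omega_n)=w_{e_1}(\Omega_n)=1+\sqrt2\not\to2=w^{(2)}(\Omega).
\]
The limit $e_1$ of the second principal directions is not a second principal direction of $\Omega$ either. Your proposed repair also fails here: $(1,1)/\sqrt2\in V^{(1)}(\Omega)$ is not a limit of elements of $V^{(1)}(\Omega_n)$.

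The paper's proof has the same structure as yours, and your lower-bound step matches its ``$\ge$'' step. Its error is in the upper bound. There it takes $\bar\nu^{(i)}$ realizing $w^{(i)}(\Omega)$ and asserts $\langle\bar\nu^{(i)},\nu^{(j)}_*\rangle=0$ for $j<i$. But $\bar\nu^{(i)}$ is only orthogonal to \emph{some} tuple in $V^{(i-1)}(\Omega)$, not necessarily to the limit tuple. In the example the limit is $\nu^{(1)}_*=\pm e_2$, while every optimal $\bar\nu^{(2)}$ lies in $\{(\pm1,\pm1)/\sqrt2\}$. The Gram--Schmidt argument does work under a uniqueness hypothesis, e.g.\ when each $V^{(j)}(\Omega)$, $j<i$, is a single tuple up to signs.

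Your fallback is also weaker than you hope. Your induction yields only three facts: continuity of $w^{(1)}$; limit points of $V^{(1)}(\Omega_n)$ lie in $V^{(1)}(\Omega)$; and $\liminf_n w^{(2)}(\Omega_n)\ge w^{(2)}(\Omega)$. It stops there because hypothesis (b) already fails at level $2$.

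Lower semicontinuity of $w^{(3)}$ actually fails too. For $\Omega=Z^\circ$ with $Z$ centrally symmetric, $w_\nu(\Omega)=2/r_Z(\nu)$, where $r_Z(\nu)=\max\{t:t\nu\in Z\}$. So principal widths correspond to greedily chosen maximal radii of $Z$. Take
\[
Z=\coh\{\pm e_1,\pm u',\pm0.95\,e_2,\pm0.6(f\pm e_3)\},\qquad u'=(\tfrac12,\tfrac{\sqrt3}{2},0),\quad f=(-\tfrac{\sqrt3}{2},\tfrac12,0).
\]
The branch through $e_1$ gives radii $(1,0.95,0.6)$, and the branch through $u'$ gives $(1,0.6\sqrt2,0.6\sqrt2)$. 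Pushing $\pm u'$ slightly outward then yields $w^{(3)}(\Omega_n)=\sqrt2/0.6<10/3=w^{(3)}(\Omega)$.

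On later use: the weak form does not rescue Step~(ii) of \cref{prop:realization_principal_width}, which needs the limit directions to be principal. That proposition can be proved directly, without approximation. Let $W=\{\nu^{(1)},\dots,\nu^{(i-1)}\}^\perp$. For $u\in W$,
\[
\sigma_\Omega(u)+\sigma_\Omega(-u)\ge w^{(i)}|u|\ge w^{(i)}\langle\nu^{(i)},u\rangle,
\]
with equality at $u=\nu^{(i)}$. Hence $w^{(i)}\nu^{(i)}$ is a subgradient at $\nu^{(i)}$ of the support function of the projection of $\Omega-\Omega$ onto $W$. That is, $w^{(i)}\nu^{(i)}$ is the projection onto $W$ of some $p-q$ with $p\in F(\Omega,H_{\nu^{(i)}}(\Omega))$ and $q\in F(\Omega,H_{-\nu^{(i)}}(\Omega))$.
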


\begin{proof}
Let us start by recalling some known facts from convex geometry. First, the Hausdorff convergence $\Omega_n\to \Omega$ is equivalent to the uniform convergence of the support functions, that is,
\begin{equation}
\label{eq:uniform_support_function}
\|\sigma_{\Omega_n}-\sigma_\Omega\|_\infty \to 0,
\end{equation}
refer to~\cite[Lem.~1.8.14]{Sch14}. Second, Hausdorff convergence implies that the corresponding sequence of diameters is uniformly bounded, that is, there exists $C>0$ such that
\begin{equation}
\label{eq:diameter_bound}
\diam(\Omega_n) \le C, \qquad \forall n.
\end{equation}
Third, fixed any convex and compact set $E$, the function $\nu\mapsto w_\nu(E)$ is Lip\-schitz, with Lip\-schitz constant given by the diameter of the set, that is, for all directions $\nu,u\in\mathbb{S}^{N-1}$, it holds that
\begin{equation}
\label{eq:directional-w_Lipschitz}
|w_\nu(E)-w_u(E)| \le \diam(E)\|\nu-u\|,
\end{equation}
refer to~\cite[Cor.~1.8.13]{Sch14}.

We can now turn our attention to proving the lemma. Let $(\Omega_n)_n$ be fixed and let $\Omega$ be its Hausdorff limit. For each $i=1,\dots, N$ and each $n$, by \cref{rem:well-posedness} we find unit vectors $\nu_n^{(i)}$ such that
\begin{equation}
\label{eq:choice_w-i-Omega_n}
\nu_n^{(i)} = \nu^{(i)}(\Omega_n),
\end{equation}
and such that
\begin{equation}
\label{eq:orthogonality-w_i^n}
\langle \nu_n^{(i)}, \nu_n^{(j)} \rangle = 0,\qquad\text{if } i\neq j.
\end{equation}

Let us consider the $N$ sequences $(\nu_n^{(i)})_n \subset\mathbb{S}^{N-1}$. As $\mathbb{S}^{N-1}$ is compact, up to subsequences, there exist $N$ unit vectors $\nu^{(i)}_{*}$ such that
\begin{equation}
\label{eq:convergenza_direzioni}
\nu_n^{(i)} \to \nu^{(i)}_{*}, \qquad \forall i=1,\dots,N.
\end{equation}
Further, the orthogonality~\eqref{eq:orthogonality-w_i^n} is preserved in the limit, that is,
\begin{equation}
\label{eq:limit_orthogonality}
\qquad \langle\nu^{(i)}_{*}, \nu^{(j)}_{*}\rangle = 0, \qquad \text{if } i\neq j.
\end{equation}

{\bf Step~(i).} We claim that
\begin{equation}
\label{eq:convergence-to-width-Omega}
w^{(i)}(\Omega_n) \to w_{\nu^{(i)}_{*}}(\Omega),\qquad \forall i =1,\dots,N.
\end{equation}
Indeed, by~\eqref{eq:choice_w-i-Omega_n}, by the Lipschitz continuity~\eqref{eq:directional-w_Lipschitz}, using the definition of directional width and the triangular inequality, we obtain
\begin{align*}
|w^{(i)}(\Omega_n)-w_{\nu^{(i)}_{*}}(\Omega)| 
&
=
|w_{\nu_n^{(i)}}(\Omega_n)-w_{\nu^{(i)}_{*}}(\Omega)|
\\
&
\le
|w_{\nu_n^{(i)}}(\Omega_n)-w_{\nu^{(i)}_{*}}(\Omega_n)|
+
|w_{\nu^{(i)}_{*}}(\Omega_n)-w_{\nu^{(i)}_{*}}(\Omega)|
\\
&
\le
\diam(\Omega_n)\|\nu_n^{(i)}-\nu^{(i)}_{*}\|
+
2\|\sigma_{\Omega_n} - \sigma_{\Omega}\|_\infty.
\end{align*}
By the equiboundedness of the diameters~\eqref{eq:diameter_bound}, the convergence of the unit vectors~\eqref{eq:convergenza_direzioni}, and the uniform convergence~\eqref{eq:uniform_support_function}, the RHS goes to $0$ when $n\to+\infty$, thus proving the claim.

{\bf Step~(ii).} We now show by induction that
\[
w^{(i)}(\Omega)=w_{\nu^{(i)}_{*}}(\Omega), \qquad \forall i=1,\dots,N.
\]

Let $i=1$. By definition of minimal width, we trivially have the inequality
\[
w_{\nu^{(1)}_{*}} (\Omega) \ge w^{(1)}(\Omega).
\]
To show the opposite inequality, let $\bar\nu^{(1)}$ be a direction realizing the minimal width of $\Omega$. Then, owing to the uniform convergence~\eqref{eq:uniform_support_function}, the definition of minimal width, and to~\eqref{eq:convergence-to-width-Omega}, we have
\[
w^{(1)}(\Omega) 
= 
w_{\bar\nu^{(1)}}(\Omega) 
= 
\lim_n w_{\bar \nu^{(1)}}(\Omega_n)
\ge
\lim_n w^{(1)}(\Omega_n)
=
w_{\nu^{(1)}_{*}}(\Omega).
\]
This also proves that any limit of $(\nu_n^{(1)})_n$ is a direction realizing the minimal width of the limit set $\Omega$.

Let now $i>1$, and assume that the claim holds for all indexes up to $i-1$. By virtue of the inductive assumption, for all $j<i$, we have that
\[
w^{(j)}(\Omega) = w_{\nu^{(j)}_{*}}(\Omega),
\]
and $\nu^{(j)}_{*}$ is a direction that realizes $w^{(j)}(\Omega)$.
By~\eqref{eq:limit_orthogonality}, $\nu^{(i)}_{*}$ is orthogonal to all these directions, thus, by definition  of $i$-th principal width,
\[
w_{\nu^{(i)}_{*}}(\Omega) \ge w^{(i)}(\Omega).
\]

Let us now prove the opposite inequality. Let $\bar \nu^{(i)}$ be a direction realizing the $i$-th principal width of $\Omega$. In particular, this means that $\langle\bar \nu^{(i)}, \nu^{(j)}_{*}\rangle = 0$ for all $j<i$.
For each $n$, we consider the partial Gram--Schmidt decomposition of $\bar \nu^{(i)}$ on $\vspan\{\nu_n^{(j)}, j<i\}$, that is, we let
\begin{equation*}
\bar\nu_n^{(i)}
=
\frac{\bar\nu^{(i)} - \displaystyle\sum_{j=1}^{i-1} \langle\bar\nu^{(i)}, \nu_n^{(j)}\rangle \nu_n^{(j)}}{\Big\|\bar\nu^{(i)} - \displaystyle\sum_{j=1}^{i-1} \langle\bar\nu^{(i)}, \nu_n^{(j)}\rangle \nu_n^{(j)} \Big\|}.
\end{equation*}

First, notice that for $n\gg 1$, this is well-defined, since $\langle\bar\nu^{(i)}, \nu^{(j)}_{*}\rangle = 0$ for all $j<i$, and $\nu_n^{(j)}$ converges to $\nu^{(j)}_{*}$. Second, $\bar\nu_n^{(i)} \in \mathbb{S}^{N-1}$ and it converges to $\bar \nu^{(i)}$. Third, $\bar \nu_n^{(i)}$ is orthogonal to $\nu_n^{(j)}$ for all $j<i$, thus it is a viable competitor in the definition of $w^{(i)}(\Omega_n)$.

Then, just as we did before, exploiting the uniform convergence~\eqref{eq:uniform_support_function}, the Lipschitz continuity~\eqref{eq:directional-w_Lipschitz}, the convergence~\eqref{eq:convergence-to-width-Omega} shown in Step~(i), the equiboundedness of the diameters~\eqref{eq:diameter_bound}, and the convergence of $\bar\nu_n^{(i)}$ just discussed, we get
\begin{align*}
w^{(i)}(\Omega)
&
=
w_{\bar \nu^{(i)}}(\Omega)
=
\lim_n w_{\bar \nu^{(i)}}(\Omega_n)
\\
&
=
\lim_n w_{\bar \nu_n^{(i)}}(\Omega_n)
+
\lim_n \big(w_{\bar \nu^{(i)}}(\Omega_n) - w_{\bar \nu_n^{(i)}}(\Omega_n)\big)
\\
&
\ge
\lim_n w^{(i)}(\Omega_n) - \lim_n \diam(\Omega_n)\|\bar\nu^{(i)} - \bar\nu_n^{(i)}\|
=
w_{\nu^{(i)}_{*}}(\Omega).
\end{align*}
This again proves both the equality and the fact that any limit point of the sequence $(\nu_n^{(i)})_n$ is a direction that realizes $w^{(i)}(\Omega)$.
Finally, since the limit points $\nu^{(1)}_*,\dots,\nu^{(N)}_*$ are pairwise orthonormal and they realize the principal widths $w^{(1)},\dots,w^{(N)}$, they form a set of principal directions of $\Omega$.
\end{proof}

\begin{proposition}
\label{prop:realization_principal_width}
Let $\Omega\in\BK_N$. For all $i=1,\dots,N$, there exist two points, $p^{(i)}\in F(\Omega,H_{\nu^{(i)}}(\Omega))$ and $q^{(i)}\in F(\Omega,H_{-\nu^{(i)}}(\Omega))$, such that
%
\[
p^{(i)}-q^{(i)} \in \vspan\{\nu^{(j)}\}_{j=1}^i,
\qquad
\text{and}
\qquad
\langle p^{(i)}-q^{(i)},\nu^{(i)}\rangle = w^{(i)},
\]
%
where $(\nu^{(1)},\dots,\nu^{(N)})$ is a set of principal directions of $\Omega$.
\end{proposition}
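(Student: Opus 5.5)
The plan is a first-order optimality (variational) argument on the directional width, applied one index at a time. Fix $i$ and a set of principal directions $(\nu^{(1)},\dots,\nu^{(N)})\in V^{(N)}$. Write $F^+=F(\Omega,H_{\nu^{(i)}}(\Omega))$ and $F^-=F(\Omega,H_{-\nu^{(i)}}(\Omega))$; both are nonempty, convex and compact since $\Omega$ is a convex body. Let $U=\vspan\{\nu^{(j)}\}_{j=i+1}^N$ and let $P$ denote the orthogonal projection onto $U$.

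\textbf{Reduction.} For any $p\in F^+$ and $q\in F^-$ we have $\langle p,\nu^{(i)}\rangle=\sigma_\Omega(\nu^{(i)})$ and $\langle q,\nu^{(i)}\rangle=-\sigma_\Omega(-\nu^{(i)})$. Hence automatically
\[
\langle p-q,\nu^{(i)}\rangle=w_{\nu^{(i)}}(\Omega)=w^{(i)} .
\]
So the whole statement reduces to finding $p\in F^+$ and $q\in F^-$ with $Pp=Pq$, i.e. to proving $P(F^+)\cap P(F^-)\neq\emptyset$. For $i=N$ we have $U=\{0\}$ and this is trivial, so assume $i<N$.

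\textbf{Contradiction via separation.} Suppose $P(F^+)\cap P(F^-)=\emptyset$. These are disjoint compact convex subsets of $U$, so the strict separation theorem gives a unit vector $u\in U$ with
\[
\max_{x\in F^+}\langle x,u\rangle<\min_{y\in F^-}\langle y,u\rangle .
\]
Here we use that $\langle x,u\rangle=\langle Px,u\rangle$ because $u\in U$.

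\textbf{Rotating the direction.} Consider the unit vectors $\nu_t=\cos t\,\nu^{(i)}+\sin t\,u$ for small $t>0$. Since $u\in U$, each $\nu_t$ is orthogonal to $\nu^{(1)},\dots,\nu^{(i-1)}$. By construction $(\nu^{(1)},\dots,\nu^{(i-1)})\in V^{(i-1)}$, so $\nu_t$ is an admissible competitor in the infimum defining $w^{(i)}$. This yields $w_{\nu_t}(\Omega)\ge w^{(i)}=w_{\nu_0}(\Omega)$ for all small $t$.

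\textbf{Derivative computation.} Now compute the right derivative at $t=0$. I will use the classical formula for the directional derivative of the support function of a convex body (see~\cite[Thm.~1.7.2]{Sch14}):
\[
\sigma_\Omega'(\nu;v)=\max_{x\in F(\Omega,H_\nu(\Omega))}\langle x,v\rangle .
\]
Since $\frac{d}{dt}\nu_t|_{t=0}=u$ and the cosine term contributes only $O(t^2)$, this gives
\[
\frac{d}{dt}\Big|_{t=0^+}w_{\nu_t}(\Omega)=\max_{x\in F^+}\langle x,u\rangle+\max_{y\in F^-}\langle y,-u\rangle=\max_{F^+}\langle\cdot,u\rangle-\min_{F^-}\langle\cdot,u\rangle<0 .
\]
Thus $w_{\nu_t}(\Omega)<w^{(i)}$ for small $t>0$, contradicting the minimality above. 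Hence the projections intersect, and we can choose $p\in F^+$, $q\in F^-$ with $Pp=Pq$, so that $p-q\in\vspan\{\nu^{(j)}\}_{j=1}^i$.

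\textbf{Main obstacle.} The delicate step is the derivative computation. One must apply the support-function derivative formula correctly, including the $O(t^2)$ error from $\cos t$ being absorbed by $1$-homogeneity and Lipschitz continuity of $\sigma_\Omega$. One must also check carefully that rotating within $\vspan\{\nu^{(i)},u\}$ keeps $\nu_t$ admissible for the recursive definition of $w^{(i)}$ with the \emph{same} earlier directions. If the derivative formula is not taken for granted, I would instead argue directly: bound $\sigma_\Omega(\nu_t)\le\langle x_t,\nu_t\rangle$ with $x_t\in F(\Omega,H_{\nu_t}(\Omega))$, and use that, by compactness and upper semicontinuity of faces, any limit of $x_t$ as $t\to0^+$ lies in $F^+$. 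The analogous bound for $-\nu_t$ then yields the same first-order estimate.
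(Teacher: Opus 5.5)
Your argument is correct, and it reaches the statement by a genuinely different route from the paper's.

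The paper first assumes $\Omega$ strictly convex, so that $\sigma_\Omega$ is differentiable on $\R^N\setminus\{0\}$ and $\nabla\sigma_\Omega(u)$ is the unique point of $F(\Omega,H_u(\Omega))$. It then applies the Lagrange multiplier theorem to $u\mapsto\sigma_\Omega(u)+\sigma_\Omega(-u)$ under the constraints $\|u\|^2=1$ and $\langle u,\nu^{(j)}\rangle=0$ for $j<i$. This gives $\nabla\sigma_\Omega(\nu^{(i)})-\nabla\sigma_\Omega(-\nu^{(i)})\in\vspan\{\nu^{(j)}\}_{j=1}^i$ directly. General convex bodies are then handled by Hausdorff approximation with strictly convex bodies. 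There it uses \cref{lem:Hausdorff-p-widths} to pass principal widths and directions to the limit, and checks that limits of the contact points stay in the correct support sets.

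Your route works directly on the nonsmooth support function. You reduce to $P(F^+)\cap P(F^-)\neq\emptyset$ and combine strict separation with the one-sided derivative formula $\sigma_\Omega'(\nu;v)=\max_{x\in F(\Omega,H_\nu(\Omega))}\langle x,v\rangle$. This is exactly the nonsmooth counterpart of the paper's Lagrange condition, with the support sets in place of the gradients.

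What your approach buys:
\begin{itemize}
\item It needs no strict convexity, no approximation, and no continuity lemma for principal widths.
\item More substantively, it proves the claim for \emph{every} set of principal directions, indeed for every $(\nu^{(1)},\dots,\nu^{(i)})\in V^{(i)}$. The paper's limiting argument only yields it for \emph{some} set of principal directions, namely a limit of principal directions of the approximating bodies.
\end{itemize}

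In exchange, the paper's route reduces the core step to a standard smooth computation. Your fallback via $x_t\in F(\Omega,H_{\nu_t}(\Omega))$ and upper semicontinuity of support sets also works, because only an upper bound on $w_{\nu_t}(\Omega)$ is needed.
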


\begin{proof}
Since $\interior(\Omega)\ne\emptyset$, clearly $w^{(1)}(\Omega)>0$, and hence by \cref{prop:properties_p-widths}~(i) we have $w^{(i)}(\Omega)>0$ for all $i=1,\dots,N$.

\textbf{Step~(i).} We start by assuming that $\Omega$ is strictly convex. Strict convexity is equivalent to the differentiability everywhere in $\mathbb{R}^N\setminus\{0\}$ of (the $1$-homogeneous positive extension of) $\sigma_\Omega$, see, \emph{e.g.},~\cite[Thm.~1.7.4]{Sch14}.

Let $\nu^{(1)},\dots,\nu^{(i-1)}$ be the first $i-1$ vectors in a fixed set of principal directions of $\Omega$.
Then, the $i$-th principal width $w^{(i)}(\Omega)$ is the minimum of the constrained problem
\[
\inf\{\, 
w_{\nu}(\Omega)\,:\,  \|u\|^2=1,\, \langle u, \nu^{(j)}\rangle = 0,\, \forall j=1,\dots\,i-1 \, \},
\]
which we know to be realized by an $i$-th principal direction $\nu^{(i)}$.
The $i$ constraints
\[
g_j(u) = \langle u, \nu^{(j)}\rangle, \, \forall j=1,\dots,i-1 \qquad \text{and} \qquad g_i(u)=\|u\|^2-1
\]
are of class $\mathcal{C}^1$. We claim that the Jacobian matrix of $g = (g_j)_{j=1}^i$, that is,
\[
Jg(u) = (\nabla g_j)_{j=1}^i = 
\left(
    \nu^{(1)},
    \quad
    \cdots 
    \quad
    \nu^{(i-1)}, 
    \quad
    2u
\right)^T
\]
has maximal rank on the set $\{\,\|u\|^2=1,\, \langle u, \nu^{(j)}\rangle = 0,\, \forall j=1,\dots,i-1\,\}$. Clearly, the first $i-1$ rows are pairwise orthogonal, and thus linearly independent. If the last row were a linear combination of the previous ones, we would have
\[
2u = \displaystyle \sum_{j=1}^{i-1} \alpha_j \nu^{(j)}
\]
for some $\alpha_1,\dots,\alpha_{i-1}\in\R$ that are not all $0$. If we take $j$ such that $\alpha_j\ne0$, then $\langle u, \nu^{(j)}\rangle  = \alpha_j/2 \ne0$, against the prescribed constraint.

Since the function $f(u) = \sigma_\Omega(u)+\sigma_\Omega(-u)$ is differentiable in a neighborhood of $\mathbb{S}^{N-1}$, being the sum of two differentiable functions in $\R^N\setminus\{0\}$, we can apply\footnote{This theorem is usually stated assuming $f$ of class $\mathcal{C}^1$, but it is actually sufficient for it to be differentiable, since its regularity comes into play only to use the Chain Rule. The $\mathcal{C}^1$ regularity of $g$, on the other hand, cannot be weakened, as it is needed to apply the Implicit Function Theorem.} Lagrange Multipliers Theorem: there exist $\mu_1,\dots,\mu_i\in\R$ such that
\begin{align*}
\nabla f(\nu^{(i)}) 
&
=
\nabla\sigma_\Omega(\nu^{(i)})-\nabla\sigma_\Omega(-\nu^{(i)}) 
\\
&
=
\displaystyle\sum_{j=1}^i \mu_j \nabla g_j
=
\displaystyle\sum_{j=1}^{i-1} \mu_j \nu^{(j)} + 2\mu_i \nu^{(i)}.
\end{align*}

Since $\Omega$ is strictly convex, one has
\[
\{\nabla\sigma_\Omega(u)\} = F(\Omega,H_u(\Omega)), \qquad \forall u\in\mathbb{S}^{N-1},
\]
refer to~\cite[Cor.~1.7.3 and Thm.~1.7.4]{Sch14}. Therefore, from the two equalities above we have
\[
p^{(i)} - q^{(i)} \in \vspan\{\nu^{(j)}\}_{j=1}^i,
\]
being $p^{(i)}$, resp., $q^{(i)}$, the unique point in $F(\Omega,H_{\nu^{(i)}}(\Omega))$, resp., $F(\Omega,H_{-\nu^{(i)}}(\Omega))$. Further, 
\[
\langle p^{(i)}-q^{(i)}, \nu^{(i)} \rangle
= 
\dist(H_{\nu^{(i)}}(\Omega); H_{-\nu^{(i)}}(\Omega)) 
= 
\sigma_\Omega(\nu^{(i)}) + \sigma_\Omega(-\nu^{(i)}) 
= 
w^{(i)}(\Omega).
\]

\textbf{Step~(ii).} We can now conclude via an approximation argument. Indeed, any set $\Omega\in\BK_N$ can be approximated in the Hausdorff metric by a sequence of bounded and strictly convex sets $(\Omega_n)_n$, see \cite[Thm.~3.4.1(c)]{Sch14}. We recall again that this is equivalent to the uniform convergence of the support functions, see~\cite[Lem.~1.8.14]{Sch14}.

Let now $(\nu_n^{(1)},\dots,\nu_n^{(N)})$ be a set of principal directions of $\Omega_n$, and let $p_n^{(i)}$, resp., $q_n^{(i)}$, the unique point in $F(\Omega_n, H_{\nu_n^{(i)}}(\Omega_n))$, resp., $F(\Omega_n, H_{-\nu_n^{(i)}}(\Omega_n))$, that we found in Step~(i). In particular, for every $i=1,\dots,N$ we have
\begin{equation}
\label{eq:realization-pair}
\begin{split}
(i)\quad &p_n^{(i)} - q_n^{(i)} \in \vspan\{\nu^{(j)}_n\}_{j=1}^i,
\\
(ii) \quad &\langle p_n^{(i)}-q_n^{(i)}, \nu_n^{(i)} \rangle
=
w^{(i)}(\Omega_n).
\end{split}
\end{equation}

By \cref{lem:Hausdorff-p-widths}, the $N$-tuples $(\nu_n^{(1)},\dots,\nu_n^{(N)})_n$ converge up to subsequences to a set of principal directions $(\nu^{(1)},\dots,\nu^{(N)})$ of $\Omega$.
On the other hand, by compactness of $\Omega$ and the Hausdorff convergence of $\Omega_n$ to $\Omega$, we can assume the convergence up to subsequences of $(p_n^{(i)})_n$, resp., of $(q_n^{(i)})_n$, to some $p^{(i)}$, resp., to some $q^{(i)}$, in $\partial \Omega$. 

The two properties in~\eqref{eq:realization-pair} pass to the limit, the second one also owing to \cref{lem:Hausdorff-p-widths}. Thus, in order to conclude, it only remains to show that
\[
p^{(i)}\in F(\Omega, H_{\nu^{(i)}}(\Omega))\qquad \text{and} \qquad q^{(i)}\in F(\Omega, H_{-\nu^{(i)}}(\Omega)).
\]
First, notice that we have the convergence of $\sigma_{\Omega_n}(\nu_n^{(i)})$ to $\sigma_\Omega(\nu^{(i)})$. Indeed, fixed $\varepsilon>0$, we find $\bar n$ such that for all $n>\bar n$, it holds that
\begin{align*}
|\sigma_{\Omega_n}(u)-\sigma_\Omega(u)| 
&
\le 
\varepsilon,
\qquad
\forall u\in\mathbb{S}^{N-1}
\\
|\sigma_{\Omega}(\nu_n^{(i)})-\sigma_\Omega(\nu^{(i)})| 
&
\le 
\varepsilon,
\end{align*}
the former by virtue of the uniform convergence of $(\sigma_{\Omega_n})_n$, the latter thanks to the convergence of $(\nu_n^{(i)})_n$ to $\nu^{(i)}$ and the continuity of $\sigma_\Omega$. Then, passing to the limit in
\[
\langle p_n^{(i)}, \nu_n^{(i)}\rangle = \sigma_{\Omega_n}(\nu_n^{(i)}),
\]
which is the equality defining the supporting hyperplane of $\Omega_n$ with outer direction $\nu_n^{(i)}$, see~\eqref{eq:supporting_hyperplane}, we get
\[
\langle p^{(i)}, \nu^{(i)}\rangle = \sigma_\Omega(\nu^{(i)}),
\]
which shows that $p^{(i)}$ belongs to $F(\Omega, H_{\nu^{(i)}}(\Omega))$, as claimed. Analogously, one also gets $q^{(i)}\in F(\Omega,H_{-\nu^{(i)}}(\Omega))$, which concludes the proof.
\end{proof}

\subsection{Convex sets with small relative width}
\label{sec:relative_width}

Now, we can finally introduce the notion of \emph{relative width} of a set, which is crucial to the sufficient condition of maximizing sequences that we will prove in the next section.

\begin{definition}[Relative width]
\label{def:relative_width}
Let $\Omega\in\BK_N$ with $N\ge2$. We call \textit{relative width} of $\Omega$ the ratio 
\[
w^{(\mathrm{rel})}(\Omega) = \frac{w^{(1)}(\Omega)}{w^{(2)}(\Omega)},
\]
where the widths $w^{(1)}$ and $w^{(2)}$ are defined as in \cref{def:p-widths}.
We say that a sequence $(\Omega_n)_{n}\subset\BK_N$ has \emph{vanishing relative width} if
\(
w^{(\mathrm{rel})}(\Omega_n) \to 0.
\)
\end{definition}

\begin{theorem}
\label{thm:smallrelwidth}
Let $\Omega\in\BK_3$, with minimal width $w^{(1)}= w^{(1)}(\Omega)$ and relative width $w^{(\mathrm{rel})}= w^{(\mathrm{rel})}(\Omega) < 1/100$. Then, $\Omega$ contains an isometric copy of the parallelepiped 
\[
\left[0,w^{(1)}(1-10\sqrt{w^{(\mathrm{rel})}})\right]\times \left[0,w^{(1)}/(50\sqrt{w^{(\mathrm{rel})}})\right]^{2}.
\]
\end{theorem}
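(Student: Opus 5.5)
By scale invariance we normalize $w^{(1)}=1$ and write $\e=w^{(\mathrm{rel})}$, so that $w^{(2)}=1/\e\le w^{(3)}$. Using \cref{rem:canonical_basis}, we take principal directions $\nu^{(i)}=e_i$. By \cref{prop:properties_p-widths}(ii) we may then assume $\Omega\subset[0,1]\times I_2\times I_3$ with $|I_2|=1/\e$ and $|I_3|\ge 1/\e$.

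The plan is to produce, inside $\Omega$, two large ``horizontal'' planar convex sections near the two faces $x_1=0$ and $x_1=1$, whose projections onto the $(x_2,x_3)$-plane share a large square. Convexity then gives the box.

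\textbf{Step 1: many points of $\Omega$ near each face.} We first use \cref{prop:realization_principal_width}.
\begin{itemize}
\item For $i=1$ it gives $p^{(1)},q^{(1)}$ on the two faces $x_1=1$ and $x_1=0$, with $p^{(1)}-q^{(1)}$ parallel to $e_1$. So there is a vertical segment of length $1$ in $\Omega$.
\item For $i=2$ it gives $p^{(2)},q^{(2)}$ with $x_2$-coordinates differing by $1/\e$ and $p^{(2)}-q^{(2)}\in\vspan\{e_1,e_2\}$.
\item For $i=3$ we get two further points at $x_3$-distance at least $1/\e$.
\end{itemize}
This alone does not pin down their $x_1$-coordinates. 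The key additional input is the minimality of $w^{(1)}$: tilting the direction $e_1$ slightly towards $e_2$ or $e_3$ must not decrease the width. I would use this as follows.

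Consider $u_\theta=\cos\theta\, e_1+\sin\theta\, e_2$ with small $\theta$. We have $w_{u_\theta}(\Omega)\ge 1$. If all points of $\Omega$ with large positive $x_2$ had $x_1$ bounded away from $1$, say $x_1\le 1-\delta$, then tilting would decrease $\sigma_\Omega(u_\theta)$ by roughly $\delta\cos\theta$ while adding only a first-order term. Comparing $\sigma_\Omega(u_\theta)+\sigma_\Omega(-u_\theta)$ with $1$ at $\theta\sim\sqrt{\e}$ should force the following quantitative statement: for each of the four ``corners'' in the $(x_2,x_3)$ directions, at distance $\gtrsim 1/\sqrt{\e}$ from a central point, there are points of $\Omega$ with $x_1\ge 1-C\sqrt\e$, and similarly points with $x_1\le C\sqrt\e$.

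More concretely, I expect to argue by contradiction with the support function. If the slab $\{x_1\ge 1-10\sqrt\e\}\cap\Omega$ had projection onto the $(x_2,x_3)$-plane missing a square of side $1/(50\sqrt\e)$ around the projection of $p^{(1)}$, then there would be a direction in the $(x_2,x_3)$-plane along which this top cap is cut. A tilt $u_\theta$ towards that direction then gives width $<1$: the top support value drops by at least the amount forced, and the bottom one rises by at most about $\theta^2$ plus the cap contribution. This uses the long extents $1/\e$ in both $x_2$ and $x_3$, guaranteed by $w^{(2)},w^{(3)}\ge 1/\e$. It is the step where the constants $10$ and $1/50$ and the hypothesis $\e<1/100$ enter: the gain from tilting is of order $\theta\cdot(\text{horizontal extent})$, the cost is of order $\theta^2\cdot(\text{horizontal extent})^2\e$, and the optimization gives the $\sqrt\e$ scaling.

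\textbf{Step 2: convexity.} Let $A=\pi(\Omega\cap\{x_1\ge 1-10\sqrt\e\})$ and $B=\pi(\Omega\cap\{x_1\le 10\sqrt\e\})$, where $\pi$ is the projection onto the $(x_2,x_3)$-plane. Suppose both contain a common square $Q$ of side $1/(50\sqrt\e)$; since they are convex, it suffices that both contain its four vertices. Then for each $y\in Q$ there are points $(a,y),(b,y)\in\Omega$ with $a\ge 1-10\sqrt\e$ and $b\le 10\sqrt\e$. The segment between them lies in $\Omega$. To get an honest box I would instead take the four vertical segments over the vertices of $Q$, each containing $[10\sqrt\e,1-10\sqrt\e]\times\{y_v\}$, and use their convex hull. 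This gives the box $[10\sqrt\e,1-10\sqrt\e]\times Q$, whose height is $1-20\sqrt\e$. To reach $1-10\sqrt\e$, one adjusts the cap thresholds to $5\sqrt\e$, or applies the argument asymmetrically using that $p^{(1)}$ and $q^{(1)}$ lie exactly on the faces.

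\textbf{Main obstacle.} The hard part is the quantitative tilting argument of Step 1, which shows that the caps' projections contain a common large square centered near $\pi(p^{(1)})=\pi(q^{(1)})$. I would first try to prove it through a separating-line argument: if a vertex $v$ of $Q$ is not in $A$, separate $v$ from $A$ by a line in the plane. Then, tilting $e_1$ towards that line's normal, estimate $\sigma_\Omega(\pm u_\theta)$ using that $\Omega$ lies below height $1-10\sqrt\e$ beyond the line, and obtain $w_{u_\theta}(\Omega)<1$, contradicting the minimality of $w^{(1)}$. The length $1/\e$ of $\Omega$ in the relevant planar directions is what makes the tilt profitable.
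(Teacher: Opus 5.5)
There is a genuine gap. Step~1 carries the entire content of the theorem, and the tilting mechanism you propose for it does not work. With $w^{(1)}=1$ and $p^{(1)}-q^{(1)}=e_1$, take any unit $\xi\perp e_1$ and set $u_\theta=\cos\theta\,e_1+\sin\theta\,\xi$. Then
\[
w_{u_\theta}(\Omega)\ \ge\ \langle p^{(1)},u_\theta\rangle-\langle q^{(1)},u_\theta\rangle=\cos\theta ,
\]
so a tilt can never lower the width by more than $1-\cos\theta\approx\theta^2/2$. In particular, $\sigma_\Omega(u_\theta)$ cannot drop ``by roughly $\delta\cos\theta$'', because it stays $\ge\langle p^{(1)},u_\theta\rangle$. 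There is also no first-order gain of size $\theta\cdot(\text{horizontal extent})$ to optimize against a cost. Knowing that the top cap misses a vertex of $Q$ therefore gives you no way to force $w_{u_\theta}<1$. The paper uses nothing from minimality beyond the existence of the vertical pair $p^{(1)},q^{(1)}$, which is exactly what \cref{prop:realization_principal_width} extracts.

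A square \emph{centered} at $\pi(p^{(1)})$ is also not available in general. For $\Omega=[0,1]\times[0,L]^2$ you may have $q^{(1)}=0$ and $p^{(1)}=e_1$ on an edge, and then both caps project onto a single quadrant at $\pi(p^{(1)})$.

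The missing idea is plain convexity between the unit segment $[q^{(1)},p^{(1)}]$ and far points.
\begin{enumerate}
\item Put $q^{(1)}$ at the origin. Take $a,b\in\Omega$ with $\|a_\perp\|,\|b_\perp\|\gtrsim w^{(2)}$, chosen from $\{p^{(2)},q^{(2)}\}$ and $\{p^{(3)},q^{(3)}\}$.
\item Since $a_1,b_1\in[0,1]$, over the point $s\,a_\perp+t\,b_\perp$ (with $s,t\ge 0$) the set $\coh\{q^{(1)},p^{(1)},a,b\}$ contains all heights $x_1\in[s+t,\,1-s-t]$.
\item Restricting to $s+t\le\eta\sim 1/\sqrt{w^{(2)}}$ gives a prism of height $1-O(1/\sqrt{w^{(2)}})$. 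Its base is a triangle with a vertex at $\pi(p^{(1)})$ and sides $\gtrsim\sqrt{w^{(2)}}$.
\item \cref{cor:3p-inradius} then fits the square cross-section into this prism, provided the angle between $a_\perp$ and $b_\perp$ is bounded away from $0$ and $\pi$.
\end{enumerate}
Securing that angle is the real quantitative issue. This is why the paper distinguishes two cases:
\begin{itemize}
\item If $|q^{(2)}_3|\ge w^{(2)}/8$, it uses $p^{(2)}$ and $q^{(2)}$, which share the $x_3$-coordinate since $p^{(2)}-q^{(2)}\in\vspan\{e_1,e_2\}$.
\item Otherwise, it uses one point from each of $\{p^{(2)},q^{(2)}\}$ and $\{p^{(3)},q^{(3)}\}$.
\end{itemize}

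Your Step~2 is correct and plays the role of this final step. Without a valid Step~1, however, it has nothing to act on.
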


The proof of \cref{thm:smallrelwidth} requires some preliminary results. 

\begin{lemma}
\label{lem:3p-inradius}
Let $T$ be a triangle whose angles are all greater than or equal to $\alpha >0$ and whose smallest side has length $L>0$. Then, its inradius $\rho=\rho(T)$ satisfies
\[
\rho \ge \frac{L}{2}\tan\left(\frac{\alpha}{2}\right).
\]
\end{lemma}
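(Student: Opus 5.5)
The plan is to use the classical formula $\rho = (s-a)\tan(A/2)$, where $s$ is the semiperimeter of $T$ and $a$ is the side opposite the vertex with angle $A$. This identity follows from the fact that the tangent lengths from vertex $A$ to the incircle both equal $s-a$. Together with the right angle at the tangency point, this gives $\tan(A/2)=\rho/(s-a)$.

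First, I will label the sides $a\le b\le c$, so that $a=L$, with opposite angles $A\le B\le C$. I will then apply the formula at the vertex with the largest angle, namely $C$, which is opposite the longest side $c$. This gives
\[
\rho = (s-c)\tan\left(\frac{C}{2}\right) = \frac{a+b-c}{2}\,\tan\left(\frac{C}{2}\right).
\]
Since $C\ge\alpha$ and $C/2<\pi/2$, monotonicity of the tangent gives $\tan(C/2)\ge\tan(\alpha/2)$.

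The main obstacle is bounding $a+b-c$ from below by $L=a$. This would require $b\ge c$, which in general fails, so applying the formula at the largest angle is the wrong choice. I will instead apply it at a vertex where both the tangent length and the angle can be controlled simultaneously. Each tangent length $s-x$ is half of the sum of the two adjacent sides minus the opposite one.

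The better choice is the vertex $A$ opposite the shortest side. Its tangent length is $s-a=(b+c-a)/2\ge c/2\ge a/2=L/2$, because $b\ge a$. Its angle satisfies $A\ge\alpha$ by hypothesis, and $A/2<\pi/2$, so again $\tan(A/2)\ge\tan(\alpha/2)$. Combining these gives
\[
\rho=(s-a)\tan\left(\frac{A}{2}\right)\ge \frac{L}{2}\tan\left(\frac{\alpha}{2}\right),
\]
which is the claim. The only points to verify are the tangent-length identity and that every half-angle lies in $(0,\pi/2)$, and both are immediate.
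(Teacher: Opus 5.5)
Your proof is correct and is essentially the paper's argument: both write $\rho$ as a tangent length times $\tan$ of the adjacent half-angle and bound that tangent length below by $L/2$. The paper takes the longer of the two tangent segments on an arbitrary side, while you take the vertex opposite the shortest side, whose tangent length is $s-a\ge c/2$. In a final write-up, drop the abandoned attempt at the vertex $C$.
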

\begin{proof}
    Let $A,B$ be two vertexes of $T$ and let $O$ be the center of the inscribed circle of $T$. Take $P$ to be the projection of $O$ onto $AB$ and assume without loss of generality that $\overline{AP}\ge \overline{PB}$. Since the triangle $OAP$ is rectangle in $P$, and the measure of $\widehat{OAP}$ is greater or equal to $\alpha/2$, we deduce that 
    \[
    \rho = \overline{OP} = \overline{AP} \tan(\widehat{OAP}) \ge \frac{L}{2}\tan\left(\frac{\alpha}{2}\right)\,.\qedhere
    \]
\end{proof}

\begin{corollary}
\label{cor:3p-inradius}
Let $T$ be a triangle satisfying the assumptions of \cref{lem:3p-inradius} and let $h>0$. Then, the prism $P = [0,h]\times T$ contains a translated copy of the parallelepiped $[0,h]\times [0,\ell]^{2}$, where 
\[
\ell =  \sqrt{2}\rho(T) = \frac{L}{\sqrt 2}\tan\left(\frac{\alpha}{2}\right).
\]
\end{corollary}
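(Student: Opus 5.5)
The prism $P=[0,h]\times T$ is a Cartesian product, so any set of the form $[0,h]\times S$ with $S\subset T$ lies in $P$. It therefore suffices to find a square of side $\ell=\sqrt2\,\rho(T)$ inside the triangle $T$. Then the product of $[0,h]$ with that square is a translated (and, in the transverse plane, rotated) copy of $[0,h]\times[0,\ell]^2$ inside $P$.

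To find the square, let $O$ be the incenter of $T$. The closed inscribed disk $\overline{B}_{\rho}(O)$, with $\rho=\rho(T)$, is contained in $T$, since $T$ is convex and the disk is tangent to its three sides from inside. Any square inscribed in this disk, for instance the one with vertices $O\pm(\rho/\sqrt2)(e_1\pm e_2)$, has diagonal $2\rho$. Hence its side is $2\rho/\sqrt2=\sqrt2\,\rho$, and it lies in $\overline{B}_\rho(O)\subset T$.

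Finally, \cref{lem:3p-inradius} gives $\rho\ge \frac L2\tan(\alpha/2)$, so $\sqrt2\,\rho\ge \frac{L}{\sqrt2}\tan(\alpha/2)$. The statement writes this as an equality. Strictly, one takes $\ell=\sqrt2\rho(T)$, or shrinks the square to side $\frac{L}{\sqrt2}\tan(\alpha/2)$; since a smaller square still fits, the inclusion holds with the stated value of $\ell$ in either reading.

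The only point needing care is the orientation. The square sits in $T$ with edges in some orthonormal directions of the plane of $T$, not necessarily the coordinate axes. So the copy of the parallelepiped is obtained by a translation composed with a rotation about the $x_1$-axis. This is harmless for the later use, where only isometric copies matter, and "translated copy" should be read in that sense; alternatively, one can rotate $T$ beforehand so that the inscribed square is axis-parallel. There is no genuine obstacle beyond this bookkeeping.
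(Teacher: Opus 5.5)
Your argument is correct and is essentially the one the paper intends: the paper states the corollary without proof, but $\ell=\sqrt2\,\rho(T)$ is exactly the side of a square inscribed in the incircle. So you inscribe that square in the closed incircle, take its product with $[0,h]$, and use \cref{lem:3p-inradius} to get $\sqrt2\,\rho(T)\ge \frac{L}{\sqrt2}\tan(\alpha/2)$; as you note, the displayed ``equality'' for $\ell$ is really only this inequality.

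Your orientation caveat is unnecessary, though. Since the incircle is a disk, the inscribed square can always be chosen with sides parallel to the coordinate axes of the plane of $T$; the one you wrote down, with vertices $O\pm(\rho/\sqrt2)(e_1\pm e_2)$, already is. So a pure translation suffices, and ``translated copy'' holds literally.
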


Now, we turn to the proof of \cref{thm:smallrelwidth}.

\begin{proof}[Proof of \cref{thm:smallrelwidth}]
For each $i=1,2,3$, we set $w^{(i)}\in [0,+\infty)$, $\nu^{(i)}\in \mathbb{S}^2$, and $(p^{(i)},q^{(i)})\in \Omega\times \Omega$ according to \cref{def:p-widths} and \cref{prop:realization_principal_width}. In particular, we have
\begin{align*}
&p^{(1)}-q^{(1)} = w^{(1)}\nu^{(1)},\\
&\langle p^{(2)}-q^{(2)}, \nu^{(2)}\rangle = w^{(2)},\quad \text{and}\quad \langle p^{(2)}-q^{(2)}, \nu^{(3)}\rangle = 0,\\
&\langle p^{(3)}-q^{(3)}, \nu^{(3)}\rangle = w^{(3)}.
\end{align*}
For the sake of a simpler notation, given $z = (z_{1},z_{2},z_{3})\in \R^{3}$ we shall write $z_\perp = (z_{2},z_{3})$, and we set $w=w^{(2)}=(w^{(\mathrm{rel})})^{-1}>100$.

Up to rescaling we can assume $w^{(1)} = 1$, and up to a rigid motion we can assume that $\nu^{(i)} = e_{i}$ for all $i=1,\dots,N$ and that $q^{(1)} = (0,0,0)$ and $p^{(1)}=(1,0,0)$.

Then, we set $S_{i} = \{q^{(i)},p^{(i)}\}$ and we define
\[
S = \begin{cases}
S_{1}\cup S_{2} & \text{if }|q^{(2)}_3| \ge\displaystyle \frac{w^{(2)}}{8}\\
S_{1}\cup \{a,b\} & \text{otherwise,}
\end{cases}
\]
where $a\in S_{2}$ and $b\in S_{3}$ are such that 
\begin{equation}\label{eq:Scaso2}
|a_{2}| \ge \frac{w^{(2)}}{2}\quad \text{and}\quad
|b_{3}| \ge \frac{w^{(3)}}{2}. 
\end{equation}
Note that $a$ and $b$ satisfying the above requirements necessarily exist by the definition of $S_{i}$ for $i=1,2,3$. 

The proof proceeds by showing that the convex hull of $S$, denoted by $\coh(S)$ from now on, contains an isometric copy of the parallelepiped defined in the statement. 

\medskip

\textbf{Case 1: $|q^{(2)}_3| \ge w/8$.} Here, we assume without loss of generality that $p^{(2)}_3$ and $q^{(2)}_3$ are both positive, and thus equal to $|q^{(2)}_3|$, see \cref{img:Case_1}.
\begin{figure}[t]
\centering
\begin{tikzpicture}[scale=0.4]
  \coordinate (E) at (0.00, 0.00);
  \coordinate (F) at (-5.00, 5.00);
  \coordinate (G) at (3.00, 5.00);
  \coordinate (H) at (-1.77, 1.77);
  \coordinate (I) at (1.06, 1.77);
  \coordinate (J) at (-0.35, 3.42);
  \coordinate (K) at (-8.00, 0.00);
  \coordinate (L) at (6.00, 0.00);
  \coordinate (M) at (0.00, 8.00);
  \coordinate (N) at (0.00, -4.00);

  \draw[thick, densely dashed] (-5,-3) rectangle (3,7);
  \fill[gray!20] (F) -- (E) -- (G) -- cycle;
  \draw[thick] (F) -- (E) -- (G) -- cycle;
  \fill[gray!60] (H) -- (I) -- (J) -- cycle;
  \draw[thick] (H) -- (I) -- (J) -- cycle;
  \draw[ultra thin, ->] (K) -- (L);
  \draw[ultra thin, ->] (N) -- (M);
  \fill[black] (E) circle (3pt);
  \node[below left, black] at (E) {$0$};
  \fill[black] (F) circle (3pt);
  \node[left, black] at (F) {$q^{(2)}_{\perp}$};
  \fill[black] (G) circle (3pt);
  \node[right, black] at (G) {$p^{(2)}_{\perp}$};
  \fill[black] (H) circle (3pt);
  \node[below left, black] at (H) {$\sigma$};
  \fill[black] (I) circle (3pt);
  \node[below right, black] at (I) {$\tau$};
  \fill[black] (J) circle (3pt);
  \node[above, black] at (J) {$\xi$};
\end{tikzpicture}
\caption{The geometric situation of Case~1.}
\label{img:Case_1}
\end{figure}
Let 
\[
\sigma = \frac{1}{\sqrt{w}} q^{(2)}_{\perp},\qquad 
\tau = \frac{1}{\sqrt{w}} p^{(2)}_{\perp},\qquad
\xi =\frac{1}{2}(\sigma + \tau) + \left(0,\frac{\sqrt{w}}{2}\right).
\] 
The triangle $T$ of vertexes $\sigma,\tau,\xi$ is right-angled at $\xi$ and isosceles by construction. Moreover, $T$ is contained in the triangle $Q$ obtained as the projection of $\coh(S)$ onto the $e_{1}^{\perp}$ coordinate plane (note that the vertexes of $Q$ are $p^{(2)}_{\perp}$, $q^{(2)}_{\perp}$, and the origin). Indeed, it is immediate to check that $\sigma$ and $\tau$ are contained in $Q$ and satisfy
\[
\|\sigma - \tau\| = \frac{1}{\sqrt{w}}\,\|p^{(2)}_2-q^{(2)}_2\| = \sqrt{w}.
\]
It remains to show that $\xi$ belongs to $Q$. To this aim we only have to check that its third coordinate does not exceed $|q^{(2)}_3|$, which amounts to require that
\begin{equation}\label{eq:condd2}
\xi_{3} = \frac{\sqrt{w}}{2} + \frac{|q^{(2)}_3|}{\sqrt{w}} \le |q^{(2)}_3|.
\end{equation}
Condition \eqref{eq:condd2} is equivalent to
\[
|q^{(2)}_3| \ge \frac{w}{2(\sqrt{w}-1)}
\]
which is satisfied when
\[
\frac{1}{8} \ge \frac{1}{2(\sqrt{w}-1)}\quad \Longleftrightarrow\quad w \ge 25,
\]
thanks to the standing assumption $|q^{(2)}_3|\ge w/8$.
Hence, since we assumed $w=(w^{(\mathrm{rel})})^{-1}>100$, we can conclude that $T\subset Q$.

By \cref{lem:3p-inradius} applied to the triangle $T$ we obtain  
\[
\rho(T) \ge \frac{\sqrt{w}}{4}\tan(\pi/8) \ge \frac{\sqrt{w}}{10}.
\]
Let now $[\sigma_{1}^{-},\sigma_{1}^{+}]$ be the interval such that
\[
(t,\sigma) \in \coh(S) \quad \Longleftrightarrow\quad t\in [\sigma_{1}^{-},\sigma_{1}^{+}],
\]
and similarly define the intervals $[\tau_{1}^{-},\tau_{1}^{+}]$ and $[\xi_{1}^{-},\xi_{1}^{+}]$. We now estimate $\sigma_{1}^{\pm},\tau_{1}^{\pm},\xi_{1}^{\pm}$. By convexity, we have
\[
\sigma_{1}^{-}\le \frac{q^{(2)}_1}{\sqrt{w}} \le \frac{1}{\sqrt{w}} \qquad \text{and}\qquad  \sigma_{1}^{+}\ge 1- \frac{1}{\sqrt{w}} + \frac{q^{(2)}_1}{\sqrt{w}} \ge 1-\frac{1}{\sqrt{w}}.
\]
Similarly,
\[
\tau_{1}^{-}\le \frac{p^{(2)}_1}{\sqrt{w}} \le \frac{1}{\sqrt{w}} \qquad \text{and}\qquad  \tau_{1}^{+}\ge 1- \frac{1}{\sqrt{w}} + \frac{p^{(2)}_1}{\sqrt{w}} \ge 1-\frac{1}{\sqrt{w}}.
\]
Then, in order to estimate $\xi_{1}^{\pm}$, we notice that 
\[
\frac{\xi_{3}}{|q^{(2)}_3|} = \frac{\sqrt{w}}{2|q^{(2)}_3|} + \frac{1}{\sqrt{w}} \le \frac{5}{\sqrt{w}},
\]
and hence arguing as before we get
\[
\xi_{1}^{-} \le \frac{5}{\sqrt{w}}\qquad \text{and}\qquad 
\xi_{1}^{+} \ge 1- \frac{5}{\sqrt{w}}.
\]
Consequently, the interval $[\frac{5}{\sqrt{w}},1 - \frac{5}{\sqrt{w}}]$ is contained in the intersection of the previous three intervals. By convexity, the prism $P = [\frac{5}{\sqrt{w}},1 - \frac{5}{\sqrt{w}}] \times T$ is contained in $\coh(S)$. Finally, \cref{cor:3p-inradius} allows us to conclude that, in this case, $\coh(S)$ contains a translated copy of the parallelepiped 
\[
\left[0,1-\frac{10}{\sqrt{w}}\right]\times \left[0,\frac{\sqrt 2}{10}\sqrt{w}\right]^{2}.
\]
\medskip

\textbf{Case 2: $|q^{(2)}_3|< w/8$.} We start by estimating the angle $\alpha\in (0,\pi)$ defined by the two vectors $a_{\perp}$ and $b_{\perp}$. Thanks to \eqref{eq:Scaso2}, up to a symmetry, we can assume $a_{2}\le -w/2$ and $b_{3}\le -w/2$.

It is easy to check that $\alpha$ contains the angle $\alpha_{\min}$ defined by the vectors $(a_2,-w/8)$ and $(a_2,-w/2)$. Since $-w\le a_2\le-w/2$, there exists $x \in [0,1]$ such that $a_2 = -w(1+x)/2$, see \cref{img:estimate_alpha}.
\begin{figure}[t]
    \centering
    \begin{subfigure}[b]{0.45\textwidth}
    \centering
    \begin{tikzpicture}[scale=0.4]
    
    \coordinate (A) at (0.00, 0.00);
    \coordinate (B) at (-5.00, -1.00);
    \coordinate (C) at (-5.00, -4.00);
    \coordinate (D) at (-5.00, 0.00);
    \coordinate (E) at (0.00, -4.00);
    \coordinate (F) at (-2.80, 0.00);
    \coordinate (K) at (-8.00, 0.00);
    \coordinate (L) at (6.00, 0.00);
    \coordinate (M) at (0.00, -8.00);
    \coordinate (N) at (0.00, 4.00);
    
    \draw[thick, densely dashed] (-5,-7) rectangle (3,3);
    \draw[ultra thin, ->] (K) -- (L);
    \draw[ultra thin, ->] (M) -- (N);
    \draw pic [draw, blue, fill=blue, fill opacity=0.2, angle radius=12pt] {angle = B--A--C};
    \draw pic [draw, red, fill=red, fill opacity=0.2, angle radius=8pt] {angle = D--A--B};
    \draw pic [draw, green, fill=green, fill opacity=0.2, angle radius=8pt] {angle = C--A--E};
    \draw[decorate, decoration={brace, amplitude=7pt}] (-5,3.2) -- (0,3.2) node[midway, above=5pt] {$\frac{1+x}{2}w$};
    \draw[decorate, decoration={brace, amplitude=7pt}] (3,-7.2) --
    (-5,-7.2) node[midway, below=5pt] {$w$};
    \draw[thick, blue] (A) -- (B);
    \draw[thick, blue] (A) -- (C);
    \fill[black] (A) circle (4pt);
    \fill[black] (B) circle (4pt);
    \fill[black] (C) circle (4pt);
    \node[below, black] at (B) {$(a_2,-w/8)$};
    \node[below, black] at (C) {$(p^{(2)}_2,-w/2)$};
    \node[above right, black] at (A) {$0$};
    \node[above right, blue] at (F) {$\alpha_{\min}$};
    \end{tikzpicture}
        \caption{Estimate of $\alpha_{\min}$.}
        \label{subfig:alpha_min}
    \end{subfigure}
    \begin{subfigure}[b]{0.45\textwidth}
        \centering
        \begin{tikzpicture}[scale=0.4]
    
    \coordinate (A) at (0.00, 0.00);
    \coordinate (B) at (-5.00, 1.00);
    \coordinate (C) at (3.00, -4.00);
    \coordinate (D) at (-5.00, 0.00);
    \coordinate (E) at (0.00, -4.00);
    \coordinate (F) at (-2.00, 0.00);
    \coordinate (K) at (-8.00, 0.00);
    \coordinate (L) at (6.00, 0.00);
    \coordinate (M) at (0.00, -8.00);
    \coordinate (N) at (0.00, 4.00);
    
    \draw[thick, densely dashed] (-5,-7) rectangle (3,3);
    \draw[ultra thin, ->] (K) -- (L);
    \draw[ultra thin, ->] (M) -- (N);
    \draw pic [draw, blue, fill=blue, fill opacity=0.2, angle radius=8pt] {angle = B--A--C};
    \draw pic [draw, red, fill=red, fill opacity=0.2, angle radius=12pt] {angle = B--A--D};
    \draw pic [draw, green, fill=green, fill opacity=0.2, angle radius=12pt] {angle = E--A--C};
    \draw[decorate, decoration={brace, amplitude=7pt}] (-5,3.2) -- (0,3.2) node[midway, above=5pt] {$\frac{1+x}{2}w$};
    \draw[decorate, decoration={brace, amplitude=7pt}] (3,-7.2) --
    (-5,-7.2) node[midway, below=5pt] {$w$};
    \draw[thick, blue] (A) -- (B);
    \draw[thick, blue] (A) -- (C);
    \fill[black] (A) circle (4pt);
    \fill[black] (B) circle (4pt);
    \fill[black] (C) circle (4pt);
    \node[left, black] at (B) {$(a_2,w/8)$};
    \node[below, black] at (C) {$(p^{(2)}_2,-w/2)$};
    \node[above right, black] at (A) {$0$};
    \node[below, blue] at (F) {$\alpha_{\max}$};
    \end{tikzpicture}
    \caption{Estimate of $\alpha_{\max}$.}
    \label{subfig:alpha_max}
    \end{subfigure}
    \caption{Estimates of the two angles $\alpha_{\min}$ and $\alpha_{\max}$}
    \label{img:estimate_alpha}
\end{figure}

We have that
\[
\alpha_{\min} = \alpha_{\min}(x) = \frac{\pi}{2} - \arctan\left(\frac{1}{4(1+x)}\right) - \arctan(1+x),
\]
and we can observe that the function $x\mapsto\alpha_{\min}(x)$ attains its minimum at $x=1$. This shows that
\[
\alpha \ge \alpha_{\min}(1) = \frac{\pi}{2} - \arctan\left(\frac{1}{8}\right) - \arctan(2) > \frac{\pi}{12}.
\]
We can similarly estimate $\alpha$ from above. To this aim, notice that $\alpha$ is contained in the angle $\alpha_{\max}$ defined by the vectors $(a_2,w/8)$ and $(p^{(2)}_2,-w/2)$. Then, setting as before $x\in [0,1]$ such that $a_2 = -w(1+x)/2$, we obtain
\[
\alpha_{\max}(x) = \frac{\pi}{2} + \arctan\left(\frac{1}{4(1+x)}\right) + \arctan(1-x).
\]
Noting that $\alpha_{\max}(x)$ is maximized at $x=0$, we deduce 
\[
\alpha \le \alpha_{\max}(0) = \frac{\pi}{2} + \arctan\left(\frac{1}{4}\right) + \arctan(1) < \frac{5\pi}{6}.
\]
In conclusion, we have 
\[
\frac{\pi}{12}< \alpha < \frac{5\pi}{6}.
\]
Now, since we assumed $\|a_{\perp}\|, \|b_{\perp}\| \ge w/2$, we can consider the vectors $\tilde a$ and $\tilde b$ obtained by scaling $a_{\perp}$ and $b_{\perp}$ in such a way that 
\[
\|\tilde a\| = \|\tilde b\| = \sqrt{w} < \frac{w}{2},
\] 
and then define $T$ as the triangle with vertexes $\tilde a$, $\tilde b$, and the origin, see \cref{img:Case_2}.
\begin{figure}[t]
\centering
\begin{tikzpicture}[scale=0.4]
    
    \coordinate (A) at (0.00, 0.00);
    \coordinate (B) at (-5.00, -0.50);
    \coordinate (C) at (1.00, -7.00);
    \coordinate (D) at (-2.80, -0.28);
    \coordinate (E) at (0.40, -2.80);
    \coordinate (F) at (-1.50, 0.00);
    \coordinate (K) at (-8.00, 0.00);
    \coordinate (L) at (6.00, 0.00);
    \coordinate (M) at (0.00, -8.00);
    \coordinate (N) at (0.00, 4.00);
    
    \draw[thick, densely dashed] (-5,-7) rectangle (3,3);
    \draw[ultra thin, ->] (K) -- (L);
    \draw[ultra thin, ->] (M) -- (N);
    \draw pic [draw, blue, fill=blue, fill opacity=0.2, angle radius=8pt] {angle = B--A--C};
    \draw[thick, blue] (A) -- (B);
    \draw[thick, blue] (A) -- (C);
    \fill[black] (A) circle (4pt);
    \fill[black] (B) circle (4pt);
    \fill[black] (C) circle (4pt);
    \fill[black] (D) circle (4pt);
    \fill[black] (E) circle (4pt);
    \draw[thick] (D) -- (E) -- (A) -- cycle;
    \node[below left, black] at (B) {$a'$};
    \node[below, black] at (C) {$b'$};
    \node[below, black] at (D) {$\tilde a$};
    \node[right, black] at (E) {$\tilde b$};
    \node[above right, black] at (A) {$0$};
    \node[above right, blue] at (F) {$\alpha$};
\end{tikzpicture}
\caption{The geometric situation of Case 2.}
\label{img:Case_2}
\end{figure}
It is then easy to check that $T$ is isosceles and non-degenerate, with smallest angle $\alpha>\pi/12$ and with smallest side length $L$ satisfying 
\[
L\ge 2\sin\left(\frac{\pi}{24}\right)\sqrt{w} > \frac{\sqrt{w}}{4}
\]
By similar calculations to those of Case 1, the intervals of first coordinates of points projecting onto $\tilde a$ and $\tilde b$ can be shown to contain the common interval
\[
\left[\frac{1}{2\sqrt{w}}, 1-\frac{1}{2\sqrt{w}}\right].
\]
Again, by \cref{cor:3p-inradius} we obtain that $\coh(S)$ contains an isometric copy of a parallelepiped of the form 
\[
\left[\frac{1}{2\sqrt{w}}, 1-\frac{1}{2\sqrt{w}}\right]\times \left[0,\frac{\sqrt{w}}{50}\right]^{2},
\]
which proves that $\coh(S)$ contains an isometric copy of the parallelepiped
\[
\left[0,1-\frac{1}{\sqrt{w}}\right] \times \left[0,\frac{\sqrt{w}}{50}\right]^{2}.
\qedhere
\]
\end{proof}

\section{Sufficient conditions for maximizing sequences}
\label{sec:maximizing_sequences}

We prove here our main result, which gives a sufficient condition for sequences in $\BK_3$ to be maximizing for the reverse Cheeger inequality.

\begin{theorem}
\label{thm:3-dim-reverse-cheeger}
    Every sequence $(\Omega_n)_n\subset\BK_3$ with vanishing relative width is maximizing for the reverse Cheeger inequality \eqref{eq:Buser}.
\end{theorem}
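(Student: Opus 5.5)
By scale invariance of $J$, we normalize each $\Omega_n$ so that $w^{(1)}(\Omega_n)=1$. Set $\varepsilon_n=w^{(\mathrm{rel})}(\Omega_n)\to0$. Since \cref{prop:rev_Cheeger} together with \cref{ex:examples-convex} already gives $J(\Omega_n)<\pi^2/4$, it suffices to prove
\[
\liminf_{n\to\infty} J(\Omega_n)\ge \frac{\pi^2}{4},
\]
which splits into a lower bound for $\lambda_1(\Omega_n)$ and an upper bound for $h(\Omega_n)$.

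\textbf{Lower bound on $\lambda_1$.} By \cref{prop:properties_p-widths}~(ii), $\Omega_n$ lies, up to an isometry, inside a parallelepiped $[0,1]\times[0,w^{(2)}]\times[0,w^{(3)}]$ (the widths being those of $\Omega_n$). Domain monotonicity of Dirichlet eigenvalues then gives
\[
\lambda_1(\Omega_n)\ge \pi^2\bigl(1+(w^{(2)})^{-2}+(w^{(3)})^{-2}\bigr)\ge\pi^2 .
\]
A cruder alternative is to note that the slab $\{0<x_1<1\}$ already forces $\lambda_1\ge\pi^2$ by a one-dimensional Poincaré inequality in $x_1$.

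\textbf{Upper bound on $h$.} For $n$ large we have $\varepsilon_n<1/100$, so \cref{thm:smallrelwidth} gives an isometric copy of
\[
R_n=[0,a_n]\times[0,b_n]^2,\qquad a_n=1-10\sqrt{\varepsilon_n},\qquad b_n=\frac{1}{50\sqrt{\varepsilon_n}},
\]
inside $\Omega_n$. By monotonicity~\ref{item:cheeger-monotonicity}, isometry invariance~\ref{item:cheeger-isometrie}, and using $R_n$ itself as a competitor in \eqref{eq:cheeger_constant_def} (its interior is an admissible subset, or use~\ref{item:cheeger-lip-bdd}),
\[
h(\Omega_n)\le h(R_n)\le \frac{\Per(R_n)}{|R_n|}=\frac{2b_n^2+4a_nb_n}{a_nb_n^2}=\frac{2}{a_n}+\frac{4}{b_n}\longrightarrow 2 .
\]
Combining the two bounds,
\[
J(\Omega_n)\ge \frac{\pi^2}{\bigl(2/a_n+4/b_n\bigr)^2}\longrightarrow\frac{\pi^2}{4},
\]
which proves the claim.

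The only genuinely nontrivial ingredient is the geometric \cref{thm:smallrelwidth}, which we take as given. The remaining point requiring care is that the normalization $w^{(1)}=1$ is compatible with both estimates: the thickness-$1$ slab controls $\lambda_1$ from below, while the inscribed box of height $a_n\to1$ and side $b_n\to\infty$ controls $h$ from above. Beyond this, everything is routine.
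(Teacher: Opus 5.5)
Your proposal is correct and follows essentially the same route as the paper. Like the paper, you normalize $w^{(1)}=1$, bound $\lambda_1(\Omega_n)\ge\pi^2$ via the enclosing parallelepiped of \cref{prop:properties_p-widths}, bound $h(\Omega_n)\le 2/a_n+4/b_n$ via the inscribed box of \cref{thm:smallrelwidth}, and conclude with \cref{prop:rev_Cheeger} (which, as in the paper, implicitly uses that the Cheeger set of a convex body is convex and hence strictly perimeter-regular).
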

\begin{proof}
    For the sake of a simpler notation, for each $n$ we set $w^{(i)}_n=w^{(i)}(\Omega_n)$ with $i=1,2,3$ and $w^{(\mathrm{rel})}_n=w^{(\mathrm{rel})}(\Omega_n)$.
    In addition, we can rescale the set $\Omega_n$ in such a way that $w_n^{(1)}=1$, and thus $w^{(\mathrm{rel})}_n=1/w_n^{(2)}$.
    
    By \cref{prop:properties_p-widths} we know that each set $\Omega_n$ is contained in a parallelepiped $P_n$ with edges of length $w_n^{(1)},w_n^{(2)},w_n^{(3)}$. Hence, by the monotonicity of $\lambda_1$, we get
    \begin{equation}
    \label{eq:estimate_Laplacian}
        \lambda_1(\Omega_n)\ge\lambda_1(P_n)=\pi^2\sum_{i=1}^3\frac{1}{\big(w_n^{(i)}\big)^2}\ge\pi^2.
    \end{equation}
    
    On the other hand, since the sequence has vanishing relative width, we can assume without loss of generality that every $\Omega_n$ satisfies the assumptions of \cref{thm:smallrelwidth}, and thus it contains a copy of the parallelepiped
    \[
        R_n=\left[0,1-10\sqrt{1/w_n^{(2)}}\right]\times \left[0,\left(50\sqrt{1/w_n^{(2)}}\right)^{-1}\right]^{2}.
    \]
    Hence, by the monotonicity of the Cheeger constant, we get
    \begin{align}
    \label{eq:estimate_Cheeger}
        h(\Omega_n)&\le h(R_n)\le\frac{\Per(R_n)}{|R_n|} \nonumber \\
        &=\frac{2\left(50\sqrt{1/w_n^{(2)}}\right)^{-2}+4\left(1-10\sqrt{1/w_n^{(2)}}\right)\left(50\sqrt{1/w_n^{(2)}}\right)^{-1}}{\left(1-10\sqrt{1/w_n^{(2)}}\right) \left(50\sqrt{1/w_n^{(2)}}\right)^{-2}} \nonumber\\
        &=\frac{2}{1-10\sqrt{1/w_n^{(2)}}}+4\left(50\sqrt{1/w_n^{(2)}}\right).
    \end{align}
    
    Combining the two estimates \eqref{eq:estimate_Laplacian} and \eqref{eq:estimate_Cheeger} we obtain
    \[
        J(\Omega_n)\ge\frac{\pi^2}{\left[\frac{2}{1-10\sqrt{1/w_n^{(2)}}}+ 4\left(50\sqrt{1/w_n^{(2)}}\right)\right]^2}\to\frac{\pi^2}{4},
    \]
    thanks to the assumption that $w_n^{(2)}\to+\infty$.
    By \cref{prop:rev_Cheeger} we can conclude that $J(\Omega_n)\to\pi^2/4$, and thus $(\Omega_n)_n$ is a maximizing sequence for \eqref{eq:Buser}.
\end{proof}

\subsection{Rhomboid-like sets}
\label{sec:rhomboid}
The proof of \cref{thm:smallrelwidth}, and consequently \cref{thm:3-dim-reverse-cheeger}, seems difficult to extend to the case $N\ge4$, since it heavily relies on $3$-dimensional geometric constructions.
In this section, however, we will prove that a slightly stronger assumption on the principal widths of the sets $\Omega_n$ is sufficient to ensure the maximization of the reverse Cheeger inequality in any dimension.

\begin{definition}
\label{defin:rhomboid}
Let $N\ge2$ be fixed and let $\big(v^{(i)}_\pm\big)_i$ be a collection of $2N$ distinct points of the form
\[
v^{(i)}_\pm = \alpha^{(i)}_\pm e_i \qquad \forall i=1,\dots,N
\]
where $\alpha^{(i)}_+ \ge0$ and $\alpha^{(i)}_- \le0$ are not both equal to $0$ for each $i$.

The polyhedron $\Delta$ given by the convex hull of these $2N$ points will be called a \emph{rhomboid}; note that the vertexes of $\Delta$ are contained in $\big\{v^{(i)}_\pm\,:\, i=1,\dots,N\big\}$. We assume without loss of generality that $v^{(i)}_\pm$ is a vertex of $\Delta$ for all $i$. Then, we call \emph{diagonal} of $\Delta$ any segment $d^{(i)} = [v^{(i)}_-,v^{(i)}_+]$. In particular, we have
\[
\|v^{(i)}_\pm\| = |\alpha^{(i)}_\pm| \qquad\text{and}\qquad \|d^{(i)}\| = \|v^{(i)}_+\| + \|v^{(i)}_-\| = \alpha^{(i)}_+-\alpha^{(i)}_->0.
\]
\end{definition}

\begin{lemma}
\label{lem:rhomboid}
Let $\Delta$ be an $N$-dimensional rhomboid of vertexes $\{v^{(i)}_{\pm}\}_{i=1}^N$ and assume that $\|v^{(i)}_+\|\ge\|v^{(i)}_-\|$ for every $i=1,\dots,N$.
Then, $\Delta$ contains an isometric copy of the $N$-dimensional parallelepiped
\[
    \left[0,\left(1-\frac{1}{\rho}\right)\|d^{(1)}\|\right]\times\left[0,\rho\right]^{N-1}
\]
where 
\begin{equation}
\label{eq:rhombus-assumption}
\rho = \sqrt{\frac{1}{N-1}\, \min\{\, \|v^{(i)}_+\|,\,i=2,\dots,N\}}\,.    
\end{equation}
\end{lemma}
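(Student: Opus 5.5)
The plan is to realize the parallelepiped explicitly inside $\Delta$ by convex combinations of the first diagonal $d^{(1)}$ and a small simplex spanned by the positive vertexes in the directions $e_2,\dots,e_N$. Set
\[
m=\min\{\|v^{(i)}_+\|:\ i=2,\dots,N\},
\]
so that $\rho^2=m/(N-1)$. We may assume $\rho>1$, since otherwise the first side $(1-1/\rho)\|d^{(1)}\|$ is nonpositive and the statement is empty or trivial. We write points of $\R^N$ as $(x_1,y)$ with $y\in\R^{N-1}$.

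\textbf{Step 1: a simplex inside $\Delta$.} Because $\|v^{(i)}_+\|\ge\|v^{(i)}_-\|$ and the two are not both zero, we have $\alpha^{(i)}_+>0$ for each $i$. Moreover $0\in d^{(i)}\subset\Delta$ for each $i$, and $\alpha^{(i)}_+\ge m$. Hence $\Delta$ contains the points $m e_i=\frac{m}{\alpha^{(i)}_+}v^{(i)}_++\bigl(1-\frac{m}{\alpha^{(i)}_+}\bigr)0$ for $i=2,\dots,N$. By convexity, $\Delta$ contains the $(N-1)$-dimensional simplex
\[
\Sigma=\Bigl\{(0,y):\ y_i\ge0,\ \textstyle\sum_{i\ge2}y_i\le m\Bigr\}.
\]

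\textbf{Step 2: convex combination with the diagonal.} Fix $\mu=1/\rho\in(0,1)$. For $y\in[0,\rho]^{N-1}$ we have $\sum_i y_i\le (N-1)\rho=m/\rho=m\mu$, so $(0,y/\mu)\in\Sigma\subset\Delta$. Now take any $x_1\in(1-\mu)[\alpha^{(1)}_-,\alpha^{(1)}_+]$. Then $(x_1/(1-\mu),0)\in d^{(1)}\subset\Delta$. By convexity,
\[
(x_1,y)=(1-\mu)\Bigl(\tfrac{x_1}{1-\mu},0\Bigr)+\mu\Bigl(0,\tfrac{y}{\mu}\Bigr)\in\Delta .
\]
Therefore $\Delta$ contains the box $(1-\tfrac1\rho)[\alpha^{(1)}_-,\alpha^{(1)}_+]\times[0,\rho]^{N-1}$. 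Its first side has length $(1-\tfrac1\rho)(\alpha^{(1)}_+-\alpha^{(1)}_-)=(1-\tfrac1\rho)\|d^{(1)}\|$, and a translation along $e_1$ gives the claimed parallelepiped.

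\textbf{Main point.} The only delicate step is the choice of the mixing weight $\mu$. It must be exactly large enough that the rescaled cube $[0,\rho/\mu]^{N-1}$ still fits in $\Sigma$, which forces $\mu\ge(N-1)\rho/m=1/\rho$, and no larger, so as to lose only a factor $1-1/\rho$ along $d^{(1)}$. The definition \eqref{eq:rhombus-assumption} of $\rho$ is precisely what balances these two requirements. Everything else is elementary convexity, using only $0\in\Delta$ and $\alpha^{(i)}_+\ge m$.
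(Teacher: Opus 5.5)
Your proof is correct, but it takes a different and more direct route than the paper.

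\textbf{The paper's route.} It first handles $N=2$ with Thales' theorem on the edges $[v^{(1)}_\pm,v^{(2)}_+]$. For general $N$ it slices $\Delta$ with the plane $\{x_2=\dots=x_N\}$ and writes the top vertex $v=(0,T,\dots,T)$ of the slice as $\sum_{i\ge2}\beta_i v^{(i)}_+$. It then uses the pigeonhole bound $\max_i\beta_i\ge 1/(N-1)$, transplants the planar rectangle, and checks separately that all vertexes of the box are (sub-)convex combinations of vertexes of $\Delta$.

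\textbf{Your route.} You note once that $\Delta\supseteq\coh\bigl(d^{(1)}\cup\Sigma\bigr)$ for your simplex $\Sigma$. You then write every point of the box explicitly as a convex combination with the single weight $\mu=1/\rho$. So no planar reduction and no vertex check are needed, and the dimension enters only through $\sum_i y_i\le(N-1)\rho=m/\rho$. Your remark that $\mu=1/\rho$ is forced also explains where the factor $1-1/\rho$ comes from.

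\textbf{A pitfall you avoid.} You never convert between Euclidean length in the slanted plane and coordinates, which is where the paper's general case is imprecise. The vertex $v$ has coordinate height $T=\|v\|/\sqrt{N-1}$. So for $N\ge3$, corners $\sigma\in[v^{(1)}_+,v]$ with $\sigma_i=\sqrt{\|v\|}$ give first side $\bigl(1-\sqrt{N-1}/\sqrt{\|v\|}\bigr)\|d^{(1)}\|$, not the displayed $\bigl(1-1/\sqrt{\|v\|}\bigr)\|d^{(1)}\|$. The lemma itself survives on the paper's route: take coordinate height $\rho$ and use $T=\beta_k\alpha^{(k)}_+\ge\rho^2$, which is your estimate in another guise.

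\textbf{What the slicing buys.} Mainly it lets the paper reuse the planar picture. Keeping the individual $\alpha^{(i)}_+$, it gives first side $(1-s/T)\|d^{(1)}\|$ for cube side $s$, with $T=\bigl(\sum_{i\ge2}1/\alpha^{(i)}_+\bigr)^{-1}\ge\rho^2$. Your argument yields the same refinement if you replace $\Sigma$ by $\coh\{0,v^{(2)}_+,\dots,v^{(N)}_+\}$.
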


\begin{proof}
We start by proving this lemma in dimension $N=2$ and we refer the reader to \cref{fig:rhomboid} to properly follow the construction leading to the claim.

\textbf{The case $N=2$.} Let $\sigma = (\sigma_1, \sigma_2)$ be a point on $\partial \Delta$, lying on the segment from $v^{(1)}_+$ to $v^{(2)}_+$, such that
\[
\sigma_2 = \sqrt{\|v^{(2)}_+\|}.
\]
By Thales Theorem
\[
    \frac{\|v^{(1)}_+\|}{\|v^{(2)}_+\|}=\frac{\|v^{(1)}_+\|-\sigma_1}{\sigma_2},
\]
thus, rearranging, we find
\[
    \sigma_1
    =
    \left(1-\frac{\sigma_2}{\|v^{(2)}_+\|}\right)\|v^{(1)}_+\|
    =
    \left(1-\frac{1}{\sqrt{\|v^{(2)}_+\|}}\right)\|v^{(1)}_+\|.
\]

Similarly, we let $\tau=(\tau_1,\tau_2) \in \partial \Delta$ be the point on $\partial\Delta$ with coordinates
\[
\tau_2 = \sigma_2 = \sqrt{\|v^{(2)}_+\|} \qquad \text{and} \qquad \tau_1 = -\left(1-\frac{1}{\sqrt{\|v^{(2)}_+\|}}\right)\|v^{(1)}_-\|.
\]

Consider now the rectangle $R$, contained in $\Delta$, whose vertexes are $\sigma$, $\tau$, and their projections on the $x_1$-axis. Clearly, $R$ has sides' lengths given by, respectively, 
\begin{equation*}
\begin{split}
\sigma_1-\tau_1 
&
= 
\left(1-\frac{1}{\sqrt{\|v^{(2)}_+\|}}\right)\|d^{(1)}\|\,, \\
\sigma_2 
& 
=
\sqrt{\|v^{(2)}_+\|}\,.
\end{split}
\end{equation*}
Since of course $\|v^{(2)}_+\|\ge \rho^2$, this concludes the proof of this case.
\begin{figure}[t]
\begin{center}
\begin{tikzpicture}[scale=2]

  \coordinate (O) at (0,0);
  \coordinate (A) at (1.5,1);
  \coordinate (A') at (-1,1);
  \coordinate (Z) at (0,4);

  \coordinate (R1) at (1,1);
  \coordinate (R2) at (1,2);
  \coordinate (R3) at (-0.667,2);
  \coordinate (R4) at (-0.667,1);

    \fill[gray!20] (O) -- (A) -- (Z) -- (A') -- cycle;
    \draw[thick] (O) -- (A) -- (Z) -- (A') -- cycle;
    
  \fill[gray!60] (R1) -- (R2) -- (R3) -- (R4) -- cycle;
  \draw[thick] (R1) -- (R2) -- (R3) -- (R4) -- cycle;

  \draw[dashed, thick] (-1.3,2) -- (2.2,2);


  \draw[->, ultra thin] (0,-0.2) -- (0,4.3) node[below right] {};
  \draw[->, ultra thin] (-1.3,1) -- (2.2,1);
  
     \node[below right] at (A) {$v^{(1)}_+$};
    \foreach \pt/\name in {
    (A)/{v^{(1)}_+},
    (R2)/{\sigma_n},
    (A')/{y'_n},
    (R3)/{Q_n},
    (Z)/{v^{(2)}_+}
    }
  \fill \pt circle (0.03);
  \node[above right] at (R2) {$\sigma=(\sigma_1,\|v^{(2)}_+\|^{1/2})$};
  \node[below left] at (A') {$v^{(1)}_-$};
  \node[above left] at (R3) {$\tau = (\tau_1,\|v^{(2)}_+\|^{1/2})$};
  \node[above right] at (Z) {$v^{(2)}_+$}; 

  \node[above right] at (0.35,1.2) {$R$};
 
\end{tikzpicture}
    \caption{The configuration of \cref{lem:rhomboid}, for each $2$-dimensional rhomboid $\Delta_n$ of the sequence.}
    \label{fig:rhomboid}
\end{center}
\end{figure}

\textbf{The general case.}
Let $\pi\subset\mathbb{R}^N$ be the $2$-dimensional plane identified by $x_2=\dots=x_N$. The intersection $\pi\cap\Delta$ is a $2$-dimensional rhomboid $\Sigma$ whose ``horizontal'' diagonal is $d^{(1)}$. We denote by $d$ the other diagonal.
Let $T$ be 
\[
T = \sup\{\,t\,:\, (0,t,\dots, t)\in \Sigma\,\},
\]
so that $v=(0,T,\dots,T)$ is the vertex with positive coordinates on the diagonal $d$. This vertex is a convex combination of $v^{(1)}_+,\dots,v^{(N)}_+$, that is,
\begin{equation}
\label{eq:convex-combination}
v = \sum_{i=2}^{N}\beta_i v^{(i)}_+, \qquad \text{with } \beta_i\ge 0\quad \text{ and }\quad \sum_{i=2}^{N}\beta_i = 1.
\end{equation}

Therefore, since $v^{(i)}_+$ are pairwise orthogonal,
\[
\|v\| 
= 
\sqrt{\sum_{i=2}^{N}\beta_i^2 \|v^{(i)}_+\|^2}
\ge
\beta_{k} \|v^{(k)}_+\| 
\ge
\beta_{k} \min_i \|v^{(i)}_+\|,
\]
where $k$ is the index such that $\beta_{k} = \max_i \beta_i$. From the last bit of information in~\eqref{eq:convex-combination} we can easily see that $\beta_k\ge1/(N-1)$.
Summing up, we have
\[
\|v\| \ge \frac{1}{N-1}\, \min\{\, \|v^{(i)}_+\|,\,i=2,\dots,N\,\} = \rho^2.
\]
On the other hand, by the same argument used for the case $N=2$, we find two points $\sigma=(\sigma_i)_i$ and $\tau=(\tau_i)_i$ on $\partial \Sigma\subset \partial \Delta$ such that
\begin{equation*}
\begin{split}
& \sigma_i=\sigma_j=\sqrt{\|v\|}, \qquad \forall i,j= 2,\dots, N,
\\
& \sigma_i=\tau_i=\sqrt{\|v\|},  \qquad \,\forall i=2,\dots, N,
\\
& \sigma_1 > 0 > \tau_1,
\\
& \sigma_1-\tau_1 =\left(1-\frac{1}{\sqrt{\|v\|}}\right)\|d^{(1)}\|.
\end{split}
\end{equation*}

We can now consider the $N$-dimensional parallepiped
\[
R = [\tau_1,\sigma_1]\times\left[0, \sqrt{\|v\|}\right]^{N-1} ,
\]
which, by construction, is contained in $\Delta$. Indeed, its vertex 
\[
(\sigma_1,\sqrt{\|v\|},\dots,\sqrt{\|v\|}),
\qquad\text{resp.,}\qquad
(\tau_1,\sqrt{\|v\|},\dots,\sqrt{\|v\|}),
\]
belongs to $\partial \Delta \cap \{x_i\ge 0, \forall i\}$, resp., to $\partial \Delta \cap \{x_1\le 0,\, x_i\ge 0, \forall i \ne1\}$, thus it is a convex combination of the vertexes
\[
\{v^{(i)}_+\}_{i=2}^N \quad  \text{and} \quad v^{(1)}_+, \qquad \text{resp., and } v^{(1)}_-.
\]
Any other vertex can be found from either of these by suitably suppressing some coefficients in their convex combination, and thus it is a sub-convex combination of $N$ distinct vertexes of $\Delta$. In conclusion, every vertex of $R$ belongs to $\Delta$, and thus by convexity we find that $R\subset\Delta$. Replacing $\sqrt{\|v\|}$ with $\rho$ produces an even smaller parallelepiped, whence the conclusion of the proof.
\end{proof}

Now we introduce the notion of a rhomboid-like set in $\R^N$, which will help extend the result proved in \cref{thm:3-dim-reverse-cheeger} to the case $N\ge4$, although with a slightly stronger assumption on the properties of the sequence $(\Omega_n)_n$.

\begin{definition}
\label{def:rhomboid-like}
    Let $\Omega\in\BK_N$, and let $q^{(i)},p^{(i)},\nu^{(i)}$ be defined as in \cref{def:p-widths} and \cref{prop:realization_principal_width} for every $i=1,\dots,N$.
    We say that $\Omega$ is \emph{rhomboid-like} if the points $q^{(i)},p^{(i)}$ belong to the plane $\vspan \{ \nu^{(1)},\nu^{(i)}\}$ for every $i=2,\dots,N$.
\end{definition}

\begin{theorem}
\label{thm:rhomboid-like}
    Let $\Omega\in\BK_N$ be a rhomboid-like set, having minimal width $w^{(1)}=w^{(1)}(\Omega)$ and relative width $w^{(\mathrm{rel})}=w^{(\mathrm{rel})}(\Omega)$.
    Then, $\Omega$ contains an isometric copy of the $N$-dimensional parallelepiped
    \[
        \left[0,w^{(1)}\left(1-2\sqrt{(N-1)w^{(\mathrm{rel})}}\right)\right]\times\left[0,w^{(1)}/\left(2\sqrt{(N-1)w^{(\mathrm{rel})}}\right)\right]^{N-1}
    \]
\end{theorem}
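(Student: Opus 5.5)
We plan to reduce to \cref{lem:rhomboid}: we will exhibit a rhomboid inside $\Omega$ built from the realizing points of \cref{prop:realization_principal_width}, and then rescale. By scaling and a rigid motion (\cref{rem:canonical_basis}) we set $w^{(1)}=1$, $\nu^{(i)}=e_i$, $q^{(1)}=0$ and $p^{(1)}=e_1$; the last two are possible since $p^{(1)}-q^{(1)}=w^{(1)}\nu^{(1)}$. For $i\ge2$, the rhomboid-like hypothesis gives that $p^{(i)}$ and $q^{(i)}$ lie in the plane $\vspan\{e_1,e_i\}$. Their $e_i$-coordinates differ by $w^{(i)}\ge w^{(2)}=1/w^{(\mathrm{rel})}$. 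Moreover, $\Omega$ lies in the slab $0\le x_1\le 1$.

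The key step is to produce, inside $\Omega$, a rhomboid centred at a point of the segment $[q^{(1)},p^{(1)}]$ with axes $e_1,\dots,e_N$. We pick the point $c=(c_1,0,\dots,0)$ and, for each $i\ge2$, the convex hull of $\{0,e_1,p^{(i)},q^{(i)}\}$, which is a quadrilateral in the plane $\vspan\{e_1,e_i\}$ contained in $\Omega$. Since $\Omega$ lies in the slab $0\le x_1\le1$, the line $\{x_1=c_1\}$ in this plane meets the quadrilateral in a segment whose endpoints we take as $v^{(i)}_\pm$. By Thales, the $e_i$-coordinates of these endpoints are at least $\min(c_1,1-c_1)$ times those of $p^{(i)}$ and $q^{(i)}$. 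A cleaner choice may be to take $v^{(i)}_\pm$ directly as the points where the lines through $0$ or $e_1$ and $p^{(i)}$ or $q^{(i)}$ cross the line $x_1=c_1$. Choosing $c_1=1/2$, at least one of $v^{(i)}_\pm$ has norm at least about $w^{(i)}/4\ge w^{(2)}/4$. After reflecting $e_i\mapsto -e_i$ if needed, we may assume $\|v^{(i)}_+\|\ge\|v^{(i)}_-\|$. Along $e_1$ we use $v^{(1)}_\pm=c\pm\frac12 e_1$, so that $\|d^{(1)}\|=1$. By convexity, the rhomboid $\Delta=\coh\{v^{(i)}_\pm\}$, translated so that $c$ is the origin, lies in $\Omega$.

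Next we apply \cref{lem:rhomboid} to $\Delta$. This gives a parallelepiped $[0,1-1/\rho]\times[0,\rho]^{N-1}$ with $\rho^2\ge \frac{1}{N-1}\min_i\|v^{(i)}_+\|$. Scaling back by $w^{(1)}$ then yields the claimed box. To match the constant $2\sqrt{(N-1)w^{(\mathrm{rel})}}$, we need $\rho\ge 1/(2\sqrt{(N-1)w^{(\mathrm{rel})}})$, that is, $\min_i\|v^{(i)}_+\|\ge w^{(2)}/4$.

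The main obstacle is securing exactly this bound $\|v^{(i)}_+\|\ge w^{(i)}/4$ at the slice $x_1=c_1$. The first coordinates of $p^{(i)}$ and $q^{(i)}$ are arbitrary in $[0,1]$, so the quadrilateral's slice at $x_1=1/2$ might be shortened. We first try to use the triangles with vertices $0$, $e_1$ and $p^{(i)}$ (and likewise $q^{(i)}$). Each of these meets the line $x_1=1/2$ at height at least $|p^{(i)}_i|/2$, because the segment from $p^{(i)}$ to whichever of $0$ or $e_1$ lies across $x_1=1/2$ crosses that line at height at least half of $|p^{(i)}_i|$. Since $|p^{(i)}_i|+|q^{(i)}_i|\ge w^{(i)}$, the larger of the two is at least $w^{(i)}/2$, and the corresponding crossing point has norm at least $w^{(i)}/4$. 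This is the required bound.

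Finally, the pieces must combine to give a rhomboid in the sense of \cref{defin:rhomboid}, with $2N$ distinct points. If $w^{(\mathrm{rel})}$ is not small, the statement is vacuous or trivial once the side length $1-2\sqrt{(N-1)w^{(\mathrm{rel})}}$ is nonpositive, so this case needs only a brief remark.
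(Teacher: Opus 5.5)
Your proposal is correct and is essentially the paper's proof. With the origin at the midpoint of $[q^{(1)},p^{(1)}]$ (your $c$), the paper takes the points $\tfrac12\langle p^{(i)},e_i\rangle e_i$ and $\tfrac12\langle q^{(i)},e_i\rangle e_i$ in $\Sigma_i=\Omega\cap\vspan\{e_1,e_i\}$, reflects so that the positive vertex has norm at least $w^{(i)}/4\ge w^{(2)}/4$, and applies \cref{lem:rhomboid} with $\|d^{(1)}\|=w^{(1)}$, exactly as you do. Your Thales argument with the triangles $\coh\{q^{(1)},p^{(1)},p^{(i)}\}$ supplies the justification the paper leaves implicit for why these points lie in $\Sigma_i$ (taking full slice endpoints only enlarges the rhomboid); the closing distinctness caveat is harmless because the lemma's argument uses only $v^{(1)}_\pm$ and the $v^{(i)}_+$, and the smallness caveat is harmless because the lemma applies for every value of $w^{(\mathrm{rel})}$.
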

\begin{proof}
    As usual, we can assume without loss of generality that $\nu^{(i)}=e_i$ for every $i=1,\dots,N$. Moreover, we assume that $q^{(1)}=(-1/2,0,\dots,0)$ and $p^{(1)}=(1/2,0,\dots,0)$, which implies $w^{(1)}=1$, and that
    \begin{equation}
    \label{eq:rhomboid-reflection}
        \forall i=2,\dots,N, \qquad\langle p^{(i)},e_i \rangle\ge\frac{w^{(i)}}{2}\,.
    \end{equation}
    
    For every $i=1,\dots,N$, we denote by $\Sigma_i\in\BK_2$ the intersection between $\Omega$ and the plane $\pi_i=\vspan(e_1,e_i)$.
    By the assumption that $\Omega$ is rhomboid-like, every $\Sigma_i$ is such that $w^{(i)}(\Sigma_i)=w^{(i)}(\Omega)$.
    Hence, if we define
    \begin{equation}
    \label{eq:unilateral-width}
        w^{(i)}_-(\Omega)=\langle q^{(i)},e_i\rangle \qquad \text{and} \qquad  w^{(i)}_+(\Omega)=\langle p^{(i)},e_i \rangle \qquad \forall i=2,\dots,N,
    \end{equation}
    then $\Sigma_i$ contains the points
    \[
        \tilde{q}^{(i)}=\frac{w^{(i)}_-(\Omega)}{2}e_i \qquad \text{and} \qquad \tilde{p}^{(i)}=\frac{w^{(i)}_+(\Omega)}{2}e_i \qquad \forall i=2,\dots,N.
    \]
    This implies that $\Omega$ contains the convex hull of the $2N$ points
    \[
    q^{(1)},p^{(1)},\tilde{q}^{(2)},\tilde{p}^{(2)},\dots,\tilde{q}^{(N)},\tilde{p}^{(N)},
    \]
    which is an $N$-dimensional rhomboid $\Delta$ having these points as vertexes.
    Hence, by \cref{lem:rhomboid}, $\Omega$ contains an isometric copy of the parallelepiped
    \[
        \left[0,\left(1-\frac{1}{\sqrt{\|v\|}}\right)\right]\times\left[0,\sqrt{\|v\|}\right]^{N-1}
    \]
    where $v\in\R^N$ is such that
    \[
        \|v\| \ge \frac{1}{N-1}\, \min_{2\le i\le N}\{\, \|v_+^{(i)}\|\,\}\,,
    \]
    and the points $v_+^{(i)}$ are defined as in \cref{defin:rhomboid}.
    Here we have $v_+^{(i)}=\tilde{p}^{(i)}$ for every $i=2,\dots,N$, and thus, by combining \eqref{eq:rhomboid-reflection} and \eqref{eq:unilateral-width}, we get
    \[
         \|v_+^{(i)}\|=\frac{w_+^{(i)}(\Omega)}{2}\ge\frac{w^{(i)}}{4}\ge\frac{w^{(2)}}{4}=\frac{1}{4w^{(\mathrm{rel})}},
    \]
    and so it follows that
    \[
        \sqrt{\|v\|}\ge\frac{1}{2\sqrt{(N-1)w^{(\mathrm{rel})}}}
    \]
    which concludes the proof.
\end{proof}

\begin{theorem}
    Every sequence of rhomboid-like sets $(\Omega_n)_n\subset\BK_N$ with vanishing relative widths is maximizing of the reverse Cheeger inequality.
\end{theorem}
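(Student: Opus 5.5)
The plan is to copy the proof of \cref{thm:3-dim-reverse-cheeger} verbatim, replacing \cref{thm:smallrelwidth} by \cref{thm:rhomboid-like}. Since $J$ is invariant under isometries and scalings, we rescale each $\Omega_n$ so that $w^{(1)}_n=1$. Then $w^{(\mathrm{rel})}_n = 1/w^{(2)}_n \to 0$, that is, $w^{(2)}_n\to+\infty$. We set $\varepsilon_n = 2\sqrt{(N-1)w^{(\mathrm{rel})}_n}$, so that $\varepsilon_n\to 0$. Discarding finitely many terms, we may assume $\varepsilon_n<1$, so the parallelepiped in \cref{thm:rhomboid-like} is nondegenerate.

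\textbf{Lower bound for $\lambda_1$.} By \cref{prop:properties_p-widths}(ii), $\Omega_n$ is contained in a parallelepiped $P_n$ with edge lengths $w^{(1)}_n,\dots,w^{(N)}_n$. These lengths are finite because $\Omega_n$ is bounded (point (iii) of the same proposition). Monotonicity of $\lambda_1$ and separation of variables then give
\[
\lambda_1(\Omega_n)\ge \lambda_1(P_n)=\pi^2\sum_{i=1}^N \bigl(w^{(i)}_n\bigr)^{-2}\ge \pi^2 .
\]

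\textbf{Upper bound for $h$.} By \cref{thm:rhomboid-like}, $\Omega_n$ contains an isometric copy of $R_n=[0,1-\varepsilon_n]\times[0,1/\varepsilon_n]^{N-1}$. Monotonicity of the Cheeger constant and the choice $E=R_n$ in \eqref{eq:cheeger_constant_def} give $h(\Omega_n)\le \Per(R_n)/|R_n|$. Writing $a=1-\varepsilon_n$ and $b=1/\varepsilon_n$, we have $|R_n|=ab^{N-1}$ and $\Per(R_n)=2b^{N-1}+2(N-1)ab^{N-2}$. Hence
\[
h(\Omega_n)\le \frac{2}{1-\varepsilon_n}+2(N-1)\varepsilon_n\longrightarrow 2 .
\]

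\textbf{Conclusion.} Combining the two bounds yields
\[
\liminf_n J(\Omega_n)\ge \pi^2\Bigl[\tfrac{2}{1-\varepsilon_n}+2(N-1)\varepsilon_n\Bigr]^{-2}\to \tfrac{\pi^2}{4}.
\]
On the other hand, each $\Omega_n$ is convex, so its Cheeger set is convex and hence strictly perimeter-regular (\cref{ex:examples-convex}). \cref{prop:rev_Cheeger} then gives $J(\Omega_n)<\pi^2/4$ for every $n$, so $J(\Omega_n)\to\pi^2/4$.

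I expect no genuine obstacle here; the geometric work is already contained in \cref{thm:rhomboid-like}. The only points that need care are the elementary perimeter computation for $R_n$ in dimension $N$, and the observation that \cref{prop:rev_Cheeger} applies to the open interior of $\Omega_n$. The latter holds because convex bodies have Lipschitz boundary, so the functionals extend to the closure as explained in \cref{sec:preliminaries}.
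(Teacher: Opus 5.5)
Your proposal is correct and is the argument the paper intends, where the proof is only sketched as analogous to \cref{thm:3-dim-reverse-cheeger} with \cref{thm:rhomboid-like} in place of \cref{thm:smallrelwidth}: you rescale to $w^{(1)}_n=1$, get $\lambda_1(\Omega_n)\ge\pi^2$ from the enclosing parallelepiped of \cref{prop:properties_p-widths}, get $h(\Omega_n)\le \frac{2}{1-\varepsilon_n}+2(N-1)\varepsilon_n$ from the inner parallelepiped of \cref{thm:rhomboid-like}, and conclude with \cref{prop:rev_Cheeger}. Your $N$-dimensional perimeter computation is right and, for $N=3$, reduces to the term $4\bigl(50\sqrt{1/w_n^{(2)}}\bigr)$ of the three-dimensional proof.
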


The proof is completely analogous to that of \cref{thm:3-dim-reverse-cheeger}, using \cref{thm:rhomboid-like} in place of \cref{thm:smallrelwidth}.

\section*{Acknowledgments} 

G.~S.~wishes to thank Andrea Colesanti for helpful discussions on Convex Geometry. The third author is member of INdAM--GNAMPA and is partially supported by the INdAM--GNAMPA Project, codice CUP \#E5324001950001\#, ``Disuguaglianze funzionali di tipo geometrico e spettrale''.

N.C. is funded by the Research Foundation – Flanders (FWO) via the Odysseus II programme no.~G0DBZ23N.

\noindent


\bibliographystyle{plainurl} 
\bibliography{bib_buser.bib} 

\end{document}